\renewcommand{\@biblabel}[1]{#1.} 
\newtheorem{theorem}{Theorem}
\newtheorem{lemma}{Lemma}
\newtheorem{prop}{Proposition}
\newtheorem{example}{Example}
\newtheorem{corollary}{Corollary}
\newtheorem{remark}{Remark}
\begin{document}

\begin{center}
	\Large{{\bf Matrix approach to the fractional calculus}}\\
\vskip 0.2 cm
	\large{{\it V. N. Kolokoltsov$^{1,2,3}$}}\\
	{v.n.kolokoltsov@gmail.com}\\

		\large{{\it E. L. Shishkina$^{4,5,6}$}}\\
	{shishkina@amm.vsu.ru}\\
	
	\vskip 0.2 cm
	\small{These authors contributed equally to this work.}
	\vskip 0.2 cm
\end{center}

$^1$Department of Mathematical Statistics, Lomonosov Moscow State University, Leninskie Gory, 1, Moscow, 119991,  Russia\\

$^2$Moscow Center of Fundamental and Applied Mathematics, Lomonosov Moscow State University, Leninskie Gory, 2, Moscow, 119991,  Russia\\

$^3$International Laboratory of Stochastic Analysis and its Applications, National Research University Higher School of Economics, Pokrovsky Bulvar, 11, Moscow, 109028,  Russia\\

$^4$Department of Mathematical and Applied Analysis, Voronezh State University, Universitetskaya pl., 1, Voronezh, 394018,  Russia\\

$^5$Department of Applied Mathematics	and Computer Modeling, Belgorod State National Research University (BelGU), Pobedy St., 85, Belgorod, 308015,  Russia\\

$^6$Institute of Mathematics, Physics and Information Technology,  Kadyrov Chechen State University, A. Sheripova st., 32, Grozny, 364024,  Russia \\

{\bf Abstract.} In this paper, we introduce the new construction of fractional derivatives and integrals with respect to a function, based on a matrix approach. We believe that this is a powerful tool in both analytical and numerical calculations. We begin with the differential operator with respect to a function that generates a semigroup. By discretizing this operator, we obtain a matrix approximation. Importantly, this discretization provides not only an approximating operator but also an approximating semigroup. This point motivates our approach, as we then apply Balakrishnan's representations of fractional powers of operators, which are based on semigroups. Using estimates of the semigroup norm and the norm of the difference between the operator and its matrix approximation, we derive the convergence rate for the approximation of the fractional power of  operators with the fractional power of  correspondings matrix operators. In addition, an explicit formula for calculating an arbitrary power of a two-band matrix is obtained, which is indispensable in the numerical solution of fractional differential and integral equations.

{\bf Keywords:} fractional derivatives with respect to a function, fractional power of operator, Feller semigroup, matrix powers


{\bf MSC Classification} 26A33, 46N99, 60G22

\section{Introduction}\label{sec1}

It is well-established that numerous methods exist for defining fractional derivatives and integrals (see, for example, \cite{SamkKilbMari1993,Kolokoltsov2019,Hilfer,MariShish2024}). The semigroup method offers the most comprehensive, holistic, and universal approach to introducing fractional powers of operators. Fractional powers of the negative infinitesimal generators of bounded strongly continuous semigroups have been explored by Bochner \cite{Bochner}, Phillips \cite{Phillips}, Yosida \cite{YosidaFP}, and Balakrishnan \cite{Balakrishnan1959}.

Let $\{T_t\}_{t\geq 0}$ be a contraction semigroup of type $C_0$ (meaning it is a strongly continuous semigroup) on a Banach space $X$ with infinitesimal generator $(A,D(A))$. 
According to Balakrishnan's approach, see \cite{Balakrishnan1959,Westphal1970,Yosida1980}, the fractional power $(-A)^\alpha$, $0{\,<\,}\alpha{\,<\,}1$ is defined by the formula
\begin{equation}\label{BF00}
	(-A)^\alpha u=\frac{1}{\Gamma(-\alpha)}\int\limits_0^\infty t^{-\alpha-1}(T_t-I)udt, \qquad u\in D(A).
\end{equation}

This method for constructing fractional powers of operators is both highly effective and quite general. In practical applications, we often need a formula for the fractional power of the operator that can be applied in both analytical and numerical calculations. Following this idea,  we present the matrix approach to the fractional calculus.

Consider the operator
\begin{equation}\label{Amin01}
	Af= f_{g}'(x)= \frac{df}{dg} = \lim\limits_{h\to 0}\frac{f(x+h)-f(x)}{g(x+h)-g(x)}=\frac{f'}{g'}.
\end{equation}
Operator \eqref{Amin01} is  a  {\bf derivative of a function $f$ with respect to a function $g$}.  
Such operators were studied in details in \cite{Alfonso}.
 Fractional powers of operators \eqref{Amin01} arise  in different probabilistic problems \cite{Kolokoltsov2019,Orsinger1,Orsinger2}.
 For the full history of fractional calculus with respect to functions see \cite{Arran}.

If $g$ is a strictly increasing function  
having a continuous derivative then one way to define a fractional power of \eqref{Amin01} is (see \cite{SamkKilbMari1993}, p. 326, formula 18.29)
\begin{equation}\label{Eq02}
	(D_{a+,g}^{\alpha}f)(x)=\frac{1}{\Gamma(1-\alpha)}\frac{1}{g'(x)}\frac{d}{dx}\int\limits_a^x \frac{f(t)dg(t)}{(g(x)-g(t))^{\alpha}},\qquad  0<\alpha<1.
\end{equation}
The expression \eqref{Eq02} is called the  {\bf Riemann-Liouville fractional derivatives 
	of function $f$ by a function $g$ of the order $\alpha$} ($0<\alpha<1$).

In this article our main idea is to use powers of matrices in order to obtain
fractional powers    of \eqref{Amin01}.
It is worth noting that the idea of using matrices in differential and integral calculus was  proposed in a simple specific case by Wiliam Swartz in \cite{WiliamSwartz}. Swartz suggested using matrix inversion to integrate certain functions, such as $e^{ax}\sin(bx)$, $e^{ax}\cos(bx)$, $x^ne^x$ . Specifically, for the integral $\int x^n e^xdx$, he constructed the vector of derivatives of $x^ne^x$ for $n=0,1,2,...,n$  in the form $(e^x,e^x+xe^x,...,nx^{n-1}e^x+x^ne^x)$, created corresponding matrix, inversed it and obtained integrals.

In the pioneering work \cite{PodlubnyMatr01} of I. Podlubny the matrix form representation of discrete analogues of various forms of fractional differentiation and fractional integration was suggested. Elements of matrices in \cite{PodlubnyMatr01} were constructed by using the backward fractional difference approximation for the $\alpha$-th derivative.  Matrix approach to discrete fractional calculus developed in \cite{Podlubny,PodlubnyMatr01,PodlubnyMatr02,PodlubnyMatr03} was efficiently applied to numerical solution of fractional differential and integral equations.

In our article, we consider the derivative of one function with respect to another and first construct the fractional power of the corresponding matrix. We then obtain an approximation of the fractional derivative of order $\alpha$.
	The approximation is achieved by replacing the original operator with a matrix operator and analyzing the associated semigroups to derive optimal convergence estimates for the fractional powers of the operator.
	The main result consists of convergence rate estimates for this approximation method.
	It is presented in the fifth section.
	Applications of the main result to   concrete operators are given in  the seventh section.
	 Our approach is based on the general theory of operators and semigroups as presented in \cite{KolokoltsovBook}.

In the second section, we provide an illustrative example explaining the relationship between fractional powers of a matrix and fractional powers of the differentiation operator. The third section derives an explicit formula for computing any arbitrary natural power of a double band matrix. In the fourth section, Balakrishnan's approach to constructing fractional powers of operators is examined in detail. The fifth section presents an estimate of the convergence rate of the approximate operator's power to that of the original operator. The sixth section explores the derivative of one function with respect to another. The seventh section establishes an estimate of the convergence rate  of the matrix operator for such derivatives in the case of strictly increasing measures. Finally, in the eighth section we present an arbitrary power of two-band matrices which is enabled the direct construction of fractional powers of operators, facilitating their use in computational applications.
 This article presents an alternative matrix approach to construction of the fractional power of the derivative of one function with respect to another compared to the method implemented in \cite{KolokoltsovShishkina}.  

\section{Illustrative example: matrix fractional derivative}\label{Sec02}

To illustrate the concept of using matrices for introducing new forms of fractional integro-differentiation, we begin with the simplest case and consider a derivative.
$(\mathcal{D}f)(x){\,=\,}\frac{df}{dx}$, where  $x{\,\in\,}[a,b]$, $f{\,\in\,}C([a,b])$, $a,b{\,\in\,}{\mathbb{R}}$,  $f(a){\,=\,}0$.

Let $P'{\,=\,}(x_{0},x_{1},\ldots ,x_{n})$ be a partition of $[a,x]$, $a<x\leq b$ that is
\begin{align*}
	a=x_{0}<x_{1}<x_{2}<\dots <x_{n}&=x,\\
	x_k-x_{k-1}=h=\frac{x-a}{n}&>0,
\end{align*}  
so 
$x_0{\,=\,}a$, $x_1{\,=\,}x_0{\,+\,}h$, $x_2{\,=\,}x_0{\,+\,}2h$,  $...$, $x_{n-1}{\,=\,}x_0{\,+\,}(n-1)h$, $x_{n}{\,=\,}x_0{\,+\,}nh{\,=\,}x$.

The partition $P'$ corresponds to the left-hand derivative $\mathcal{D}_{\ell}$ at the point $x{\,=\,}x_n$, $n{\,=\,}1,...,N$ is defined as  the limit of the  backward finite difference
\begin{equation}\label{Derl}
	(\mathcal{D}_{\ell}f)(x)=\lim\limits_{h\rightarrow
		0} \frac{f(x)-f(x-h)}{h}=f'_{\ell}(x)
\end{equation}
It we consider $[x,b]$, $a\leq x<b$, $h>0$ we take a partition $P''=(x_{0},x_{1},\ldots ,x_{n})$ of $[x,b]$, $a\leq x<b$
that is
\begin{align*}
	x=x_{0}<x_{1}<x_{2}<\dots <x_{n}&=b,\\
	x_k-x_{k-1}=h=\frac{b-x}{n}&>0,
\end{align*}  
so 
$x_0{\,=\,}x$, $x_1{\,=\,}x_0{\,+\,}h$, $x_2{\,=\,}x_0{\,+\,}2h$, $...$, $x_{n-1}{\,=\,}x_0{\,+\,}(n-1)h$, $x_{n}{\,=\,}x_0{\,+\,}nh{\,=\,}b$.

The partition $P''$ corresponds to
the right-hand derivative $\mathcal{D}_{r}$ at the point $x$ is
\begin{equation}\label{Derr}
	(\mathcal{D}_{r}f)(x)=\lim\limits_{h\rightarrow
		0} \frac{f(x+h)-f(x)}{h}=f'_{r}(x).
\end{equation}

Let $I_{n}=\underbrace{(1,0,...,0)}_{n}$.
In order to define the left-hand derivative in matrix form, we  set  
\begin{align*}
	\,_{\ell}f_0&=f(x_n)=f(x),\\
	\,_{\ell}f_1&=f(x_{n-1})=f(x_n-h),\\
	&...,\\
	\,_{\ell}f_{n-1}&=f(x_1)=f(x_2-h),\\
	\,_{\ell}f_{n}&=f(x_0)=f(x_1-h)=f(a)=0,\\
	\,_{\ell}\mathbf{f}_n'&=(\,_{\ell}f_0,\,_{\ell}f_{1},\,_{\ell}f_{2},...,\,_{\ell}f_{n-1})\quad{\text{is a vector of length}}\quad n.
\end{align*}
Then we associate the operator $\mathcal{D}_{\ell}$  with the $(n{\times}n)$ matrix
\begin{equation}\label{ed}
	\mathcal{A}_n=\frac{1}{h}\begin{pmatrix}
		1    & -1        & 0       & \cdots  & 0 & 0\\
		0    & 1         & -1      & \cdots & 0 & 0 \\
		\vdots   &	\vdots   &  \vdots & \cdots & \vdots & \vdots \\
		0    & 0         & 0       & \cdots & 1 & -1 \\
		0    & 0         & 0       & \cdots & 0  & 1
	\end{pmatrix},
\end{equation} 
in the sense
$$
\lim\limits_{h\rightarrow 0} \mathcal{A}_n\cdot \,_{\ell}\mathbf{f}_n=
$$
$$
=\lim\limits_{h\rightarrow
	0}\frac{1}{h}\begin{pmatrix}
	1    & -1        & 0       & \cdots  & 0 & 0\\
	0    & 1         & -1      & \cdots & 0 & 0 \\
	\vdots   &	\vdots   &  \vdots & \cdots & \vdots & \vdots \\
	0    & 0         & 0       & \cdots & 1 & -1 \\
	0    & 0         & 0       & \cdots & 0  & 1
\end{pmatrix} \cdot \begin{pmatrix}
	\,_{\ell}f_{0} \\
	\,_{\ell}f_{1} \\
	\vdots    \\
	\,_{\ell}f_{n-2} \\
	\,_{\ell}f_{n-1}
\end{pmatrix} =\lim\limits_{h\rightarrow
	0}\frac{1}{h} \begin{pmatrix}
	\,_{\ell}f_{0}-\,_{\ell}f_{1}  \\
	\,_{\ell}f_{1}-\,_{\ell}f_{2}  \\
	\vdots    \\
	\,_{\ell}f_{n-2}-\,_{\ell}f_{n-1} \\
	\,_{\ell}f_{n-1}-\,_{\ell}f_{n}
\end{pmatrix}=
$$
$$
=\begin{pmatrix}
	\lim\limits_{h\rightarrow
		0}\frac{f(x_{n})-f(x_{n-1})}{h}  \\
	\lim\limits_{h\rightarrow
		0}\frac{f(x_{n-1})-f(x_{n-2}) }{h} \\
	\vdots    \\
	\lim\limits_{h\rightarrow
		0}\frac{f(x_{2})-f(x_{1})}{h}          \\
	\lim\limits_{h\rightarrow
		0}\frac{f(x_1)-f(x_{0})}{h}  
\end{pmatrix}= \begin{pmatrix}
	f'_{\ell}(x_{n})  \\
	f'_{\ell}(x_{n-1})\\
	\vdots    \\
	f'_{\ell}(x_{2})          \\
	f'_{\ell}(x_{1})   
\end{pmatrix}
$$
and
$$
	(\mathcal{D}_{\ell}f)(x)=\lim\limits_{h\rightarrow 0} (\mathcal{A}_n  \cdot \,_{\ell}\mathbf{f}_n)\cdot I_{1} =
	\lim\limits_{h\rightarrow
		0} \frac{f(x_n)-f(x_{n-1})}{h}=f'_{\ell}(x)
$$
is a  derivative of $f$ at $x=x_n$.

In order to define the right-hand derivative in matrix form, we  set  
\begin{align*}
	\,_{r}f_0&=f(x_0)=f(x),\\
	\,_{r}f_1&=f(x_{1})=f(x_0+h),\\
	&...,\\
	\,_{r}f_{n-1}&=f(x_{n-1})=f(x_{n-2}+h),\\
	\,_{r}f_{n}&=f(x_n)=f(x_{n-1}+h)=f(b)=0,\\
	\,_{r}\mathbf{f}_n'&=(\,_{r}f_0,\,_{r}f_{1},\,_{r}f_{2},...,\,_{r}f_{n-1})\quad{\text{is a vector of length}}\quad n.
\end{align*}
We associate the operator $\mathcal{D}_{r}$  with the $(n{\times}n)$ matrix $\mathcal{A}_n$ in the sense
$$
\lim\limits_{h\rightarrow 0} \mathcal{A}_n\cdot (-\,_{r}\mathbf{f}_n)=
$$
$$
=\lim\limits_{h\rightarrow
	0}\frac{1}{h}\begin{pmatrix}
	1    & -1        & 0       & \cdots  & 0 & 0\\
	0    & 1         & -1      & \cdots & 0 & 0 \\
	\vdots   &	\vdots   &  \vdots & \cdots & \vdots & \vdots \\
	0    & 0         & 0       & \cdots & 1 & -1 \\
	0    & 0         & 0       & \cdots & 0  & 1
\end{pmatrix} \cdot \begin{pmatrix}
	-\,_{r}f_{0} \\
	-\,_{r}f_{1} \\
	\vdots    \\
	-\,_{r}f_{n-2} \\
	-\,_{r}f_{n-1}
\end{pmatrix} =\lim\limits_{h\rightarrow
	0}\frac{1}{h} \begin{pmatrix}
	\,_{r}f_{1}-\,_{r}f_{0}  \\
	\,_{r}f_{2}-\,_{r}f_{1}  \\
	\vdots    \\
	\,_{r}f_{n-1}-\,_{r}f_{n-2} \\
	\,_{r}f_{n}-\,_{r}f_{n-1}
\end{pmatrix}=
$$
$$
=\begin{pmatrix}
	\lim\limits_{h\rightarrow
		0}\frac{f(x_{1})-f(x_{0})}{h}  \\
	\lim\limits_{h\rightarrow
		0}\frac{f(x_{2})-f(x_{1}) }{h} \\
	\vdots    \\
	\lim\limits_{h\rightarrow
		0}\frac{f(x_{n-1})-f(x_{n-2})}{h}          \\
	\lim\limits_{h\rightarrow
		0}\frac{f(x_n)-f(x_{n-1})}{h}  
\end{pmatrix}= \begin{pmatrix}
	f'_{r}(x_{0})  \\
	f'_{r}(x_{1})\\
	\vdots    \\
	f'_{r}(x_{n-2})          \\
	f'_{r}(x_{n-1})   
\end{pmatrix}
$$
and
$$
	(\mathcal{D}_{r}f)(x)=\lim\limits_{h\rightarrow 0} (\mathcal{A}_n  \cdot (-\,_{r}\mathbf{f}_n))\cdot I_{n} =
	\lim\limits_{h\rightarrow
		0} \frac{f(x_1)-f(x_{0})}{h}=f'_{r}(x)
$$
is a  derivative of $f$ at $x=x_0$.

We  explore the concept of associating   differential operators with its corresponding matrix. Our goal is to construct the fractional power  $\mathcal{A}^\alpha_n$, $\alpha{\in}\mathbb{R}$, of the matrix operator $\mathcal{A}_n$. Subsequently, we will derive the fractional powers of   operators $\mathcal{D}_{\ell}$ and $\mathcal{D}_{r}$ as a limit by the formulas, respectively 
\begin{equation}\label{FracPow}
	(\mathcal{D}^\alpha_{\ell} f)(x)=
	\lim\limits_{h\to 0} (\mathcal{A}^{\alpha}_n \cdot \,_{\ell}\mathbf{f}_n)\cdot I_{n},\qquad 	(\mathcal{D}^\alpha_{r} f)(x)=
	\lim\limits_{h\to 0} (\mathcal{A}^{\alpha}_n \cdot (-\,_{r}\mathbf{f}_n))\cdot I_{n}
\end{equation}
for all $\alpha\in \mathbb{R}$. 

In order to construct the power $\mathcal{A}^\alpha_n$, $\alpha{\in}\mathbb{R}$ we will use the formula $\mathcal{A}^\alpha_n{=}e^{\alpha \ln A_n}$. Choosing for uniqueness  the main branch of the logarithm $\ln \mathcal{A}_n$, we obtain
$$
\ln \mathcal{A}_n=-\ln h+\begin{pmatrix}
	0               & -1       & -\frac{1}{2}   &\cdots   & -\frac{1}{n-2}    & -\frac{1}{n-1}\\
	0               & 0       &        -1     & \cdots   & -\frac{1}{n-3}     & -\frac{1}{n-2} \\
	\vdots          &  \vdots & \vdots       & \cdots   & \vdots & \vdots \\
	0           & 0       &        0     & \cdots   & 0              & -1 \\
	0               & 0       &        0     & \cdots   & 0               & 0 \\
\end{pmatrix},
$$
$$
\alpha\ln \mathcal{A}_n=-\alpha\ln h+\begin{pmatrix}
	0               & -\alpha       & -\frac{\alpha}{2}   &\cdots   & -\frac{\alpha}{n-2}    & -\frac{\alpha}{n-1}\\
	0               & 0       &       -\alpha     & \cdots   & -\frac{\alpha}{n-3}     & -\frac{\alpha}{n-2} \\
	\vdots          &  \vdots & \vdots       & \cdots   & \vdots & \vdots \\
	0           & 0       &        0     & \cdots   & 0              & -\alpha \\
	0               & 0       &        0     & \cdots   & 0               & 0 \\
\end{pmatrix},
$$
\begin{equation}\label{FracPow0}
	\mathcal{A}^\alpha_n=\frac{1}{h^\alpha}\begin{pmatrix}
		1         & -\alpha &     	\frac{\alpha(\alpha-1)}{2}   & \cdots & (-1)^{n-2}\frac{\alpha(\alpha-1)...(\alpha-n+3)}{(n-2)!} & (-1)^{n-1}\frac{\alpha(\alpha-1)...(\alpha-n+2)}{(n-1)!}\\
		0 &      1         & -\alpha &     	 \cdots & (-1)^{n-3}\frac{\alpha(\alpha-1)...(\alpha-n+4)}{(n-3)!} & (-1)^{n-2}\frac{\alpha(\alpha-1)...(\alpha-n+3)}{(n-2)!}\\
		\vdots   &  \vdots & \vdots & \cdots & \vdots & \vdots\\
		0   & 0   & 0&\cdots & 1 & -\alpha\\
		0   & 0   & 0 &  \cdots & 0 & 1
	\end{pmatrix}.
\end{equation}
We get general power \eqref{FracPow0} of  matrix operator $\mathcal{A}_n$ for all $\alpha\in \mathbb{R}$ which automatically has semigroup property.
We will call $\mathcal{A}^\alpha_n$ by {\bf matrix fractional derivative}.

Next we notice that when $\alpha=m$, $m\in\mathbb{N}$, $m\leq n$ we get that  in the first row all elements with a column number greater or equal than $(m{+}2)$ are equal to zero
\begin{equation}\label{FracPow00}
	\mathcal{A}^{m}_n=\frac{1}{h^m}\begin{pmatrix}
		1        &-m         &   \frac{m(m-1)}{2}   & \cdots & (-1)^m\frac{m(m-1)...1}{m!}     & 0         & \cdots &   0 \\
		0        &  1        & -m                     & \cdots & (-1)^{m-1}\frac{m(m-1)...2}{(m+1)!} & 0         & \cdots & 0 \\
		\vdots   &  \vdots   & \vdots                 & \cdots & \vdots                             &  \vdots   &\vdots  &\vdots\\
		0        & 0         & 0                      &\cdots  & 0                                  & 1         & -m     &   \frac{m(m-1)}{2}\\
		0        & 0         & 0                      &\cdots  & 0                                  & 0         &1       & -m\\
		0        & 0         & 0                      & \cdots & 0                                  & 0         &0       & 1
	\end{pmatrix},
\end{equation}
so for $\alpha=m$, $m\in\mathbb{N}$, $m\leq n$ 
we get
$$
\mathcal{A}^{m}_n\cdot \,_{\ell}\mathbf{f}_n=\frac{1}{h^m}\begin{pmatrix}
	1        &-m         &   \frac{m(m-1)}{2}   & \cdots & (-1)^m\frac{m(m-1)...1}{m!}     & 0         & \cdots &   0 \\
	0        &  1        & -m                     & \cdots & (-1)^{m-1}\frac{m(m-1)...2}{(m+1)!} & 0         & \cdots & 0 \\
	\vdots   &  \vdots   & \vdots                 & \cdots & \vdots                             &  \vdots   &\vdots  &\vdots\\
	0        & 0         & 0                      &\cdots  & 0                                  & 1         & -m     &   \frac{m(m-1)}{2}\\
	0        & 0         & 0                      &\cdots  & 0                                  & 0         &1       & -m\\
	0        & 0         & 0                      & \cdots & 0                                  & 0         &0       & 1
\end{pmatrix}\cdot \begin{pmatrix}
	\,_{\ell}f_{0} \\
	\,_{\ell}f_{1} \\
	\vdots    \\
	\,_{\ell}f_{n-3}\\
	\,_{\ell}f_{n-2} \\
	\,_{\ell}f_{n-1}
\end{pmatrix} =
$$
$$
=\frac{1}{h^m}\begin{pmatrix}
	\,_{\ell}f_{0}-m\cdot\,_{\ell}f_{1}+\frac{m(m-1)}{2}\cdot\,_{\ell}f_{2}+\cdots+ (-1)^m\frac{m(m-1)...1}{m!}\cdot\,_{\ell}f_{m} \\
	\,_{\ell}f_{1}-m\cdot\,_{\ell}f_{2}+  \cdots + (-1)^{m-1}\frac{m(m-1)...2}{(m+1)!}\cdot\,_{\ell}f_{m-2} \\
	\vdots    \\
	\,_{\ell}f_{n-3} -m\cdot\,_{\ell}f_{n-2}+\frac{m(m-1)}{2}\cdot \,_{\ell}f_{n-1}\\
	\,_{\ell}f_{n-2}-m\cdot\,_{\ell}f_{n-1} \\
	\,_{\ell}f_{n-1}
\end{pmatrix}
$$
and 
\begin{multline*}
(\mathcal{D}^{m}_{\ell}f)(x)=\lim\limits_{h\rightarrow
	0} (\mathcal{A}_n^{m} \cdot \,_{\ell}\mathbf{f}_n)\cdot I_{n} 
=\lim\limits_{h\rightarrow
	0}\sum\limits_{k=0}^{m}(-1)^{k}\binom{m}{k}f(x_n-kh)=\\
	=\lim\limits_{h\rightarrow
	0}\frac{\bigtriangledown_{h}^{m}[f](x)}{h^m}= f^{(m)}_{\ell}(x), 
\end{multline*}
where $\binom{m}{k}{=}\frac{m!}{k!(m-k)!}$ are binomial coefficients,
$ \nabla_{h}^{m}[f](x){=}\sum\limits_{k=0}^{m}(-1)^{k}\binom{m}{k}f(x-kh)$ is a  $m$-th order backward finite difference,
$f^{(m)}_{\ell}(x)$ is a left-handed derivative of $f$ at $x{\,=\,}x_n$ of order $m\in\mathbb{N}$.

Analogically,
\begin{multline*}
(\mathcal{D}^{m}_{r}f)(x)=\lim\limits_{h\rightarrow
	0} (\mathcal{A}_n^{m} \cdot (-1)^m\,_{r}\mathbf{f}_n)\cdot I_{n}=\\ 
=\lim\limits_{h\rightarrow
	0}\sum\limits_{k=0}^{m}(-1)^{m+k}\binom{m}{k}f(x_0+kh)=\lim\limits_{h\rightarrow
	0}\frac{\bigtriangleup_{h}^{m}[f](x)}{h^m}= f^{(m)}_{\ell}(x), 
\end{multline*}
where $\bigtriangleup_{h}^{m}[f](x){=}\sum\limits_{k=0}^{m}(-1)^{m+k}\binom{m}{k}f(x_0+kh)$
is a  $m$-th order forward finite difference,
$f^{(m)}_{r}(x)$ is a right-handed derivative of $f$ at $x{\,=\,}x_0$ of order $m\in\mathbb{N}$.

Now we take $\alpha\in\mathbb{R}$, $\alpha>0$, $\alpha\neq m$, $m\in\mathbb{N}$.
Taking into account that 
$$
\alpha(\alpha-1)...(\alpha-k+1)=(\alpha)_{k}
$$ 
is	the falling factorial which can be written  using the Gamma function by the formula
$$ 
(\alpha)_{k}=\frac {\Gamma (\alpha+1)}{\Gamma (\alpha-k+1)},
$$
and noticing that
the falling   factorial is connected with   a binomial coefficient by the formula: 
$$
\frac {(\alpha)_{k}}{k!}={\binom{\alpha}{k}}
$$
we can rewrite \eqref{FracPow0} in the form
\begin{equation}\label{MGLI}
	\mathcal{A}^\alpha_n=\frac{1}{h^\alpha}\begin{pmatrix}
		{\binom{\alpha}{0}}         & -{\binom{\alpha}{1}}  & {\binom{\alpha}{2}} &     \cdots & (-1)^{n-1}{\binom{\alpha}{n-1}}\\
		0 &     {\binom{\alpha}{0}}         & -{\binom{\alpha}{1}} &     	 \cdots  &(-1)^{n-2}{\binom{\alpha}{n-2}}\\
		\vdots   &  \vdots & \vdots & \cdots &   \vdots\\
		0   & 0   & 0 &  \cdots &   {\binom{\alpha}{0}}  
	\end{pmatrix}.
\end{equation}
Therefore, by \eqref{FracPow}, taking into account that $n\to\infty$ when $h\to 0$, we can write positive fractional power $\alpha$ of \eqref{Derl}:
\begin{multline}\label{GLD}
	(\mathcal{D}^\alpha_{\ell} f)(x)=\lim\limits_{h\rightarrow
		0} (\mathcal{A}_n^{\alpha} \cdot \mathbf{f}_n)\cdot I_{n}=\lim\limits_{h\rightarrow
		0}
	\frac{1}{h^\alpha}\sum\limits_{k=0}^{\infty}(-1)^{k}\frac{\Gamma(\alpha+1)}{k!\Gamma(\alpha-k+1)}f(x-kh)=\\
	=\lim\limits_{h\rightarrow
		0}
	\frac{1}{h^\alpha}\sum\limits_{k=0}^{\infty}(-1)^{k}\frac{(\alpha)_{k}}{k!}f(x-kh)=\lim\limits_{h\rightarrow
		0}
	\frac{1}{h^\alpha}\sum\limits_{k=0}^{\infty}(-1)^{k}\binom{\alpha}{k}f(x-kh).
\end{multline}	
Formula \eqref{GLD} gives the  left-hand Gr\"{u}nwald-Letnikov fractional derivative of order  $\alpha{\,>\,}0$ (see \cite{SamkKilbMari1993}, p. 373, formula (20.7)).

Taking $(-\alpha)$ instead of $\alpha$ and noticing that 
$$
(-1)^{k}\binom{-\alpha}{k}=(-1)^k\frac{(-\alpha)_k}{k!}=\frac{(\alpha+k-1)_k}{k!}=\frac{\Gamma(\alpha+k)}{k!\Gamma(\alpha)}=\frac{\alpha^{(k)}}{k!},
$$
where $\alpha^{(k)}=\frac{\Gamma(\alpha+k)}{\Gamma(\alpha)}$ is a rising factorial,
we obtain from \eqref{GLD}  the left-sided Gr\"{u}nwald-Letnikov integral of order $\alpha>0$
\begin{multline}\label{GLI}
	(\mathcal{I}^\alpha_{\ell}f)(x)=	(\mathcal{D}^{-\alpha}_{\ell}f)(x)=\lim\limits_{h\rightarrow
		0}
	h^\alpha\sum\limits_{k=0}^{\infty}(-1)^{k}\binom{-\alpha}{k}f\left(x-kh\right)=\\
	=\lim\limits_{h\rightarrow
		0}
	h^\alpha\sum\limits_{k=0}^{\infty}\frac{\Gamma(\alpha+k)}{k!\Gamma(\alpha)}f\left(x-kh\right)=\\
	=\lim\limits_{h\rightarrow
		0}
	h^\alpha\sum\limits_{k=0}^{\infty}\frac{\alpha^{(k)}}{k!}f\left(x-kh\right)=\lim\limits_{h\rightarrow
		0}
	h^\alpha\sum\limits_{k=0}^{\infty}\binom{\alpha+k-1}{k}f\left(x-kh\right).
\end{multline}
Combining formula \eqref{GLD} and \eqref{GLI} we get 	 the left-sided Gr\"{u}nwald-Letnikov fractional integro-differentiation of order  $\alpha\in \mathbb{R}$,
$\alpha\neq-m$, $m\in\mathbb{N}$ (see  \cite{SamkKilbMari1993}).

Considering $[x,b]$ by the same way we can  obtain the  right-sided Gr\"{u}nwald-Letnikov fractional derivative and integral of \eqref{Derr} of order  $\alpha>0$
\begin{equation}\label{GLD01}
	(\mathcal{D}^\alpha_{r} f)(x){\,=\,}\lim\limits_{h\rightarrow
		0}
	\frac{e^{i\alpha\pi}}{h^\alpha}\sum\limits_{k=0}^{\infty}(-1)^{k}\frac {(\alpha)_{k}}{k!}f(x+kh)=\lim\limits_{h\rightarrow0}  
	\frac{e^{i\alpha\pi}}{h^\alpha}\sum\limits_{k=0}^{\infty}(-1)^{k}\binom{\alpha}{j}f(x+kh),
\end{equation}
\begin{equation}\label{GLI01}
	(\mathcal{I}^\alpha_{r}f)(x){\,=\,}\lim\limits_{h\rightarrow
		0}
	e^{-i\alpha\pi}h^\alpha\sum\limits_{k=0}^{\infty}\frac{\alpha^{(k)}}{k!}f\left(x+kh\right)=\lim\limits_{h\rightarrow
		0}
	e^{-i\alpha\pi}h^\alpha\sum\limits_{k=0}^{\infty}\binom{\alpha{+}k{-}1}{k}f\left(x+kh\right),
\end{equation}
where we take $(-1)^\alpha=e^{i\alpha\pi}$.

In \cite{SamkKilbMari1993} fractional order differences were considered. We will use notations
$$
\nabla_{h}^{\alpha}[f](x)=(I-\,_{\ell}T_h)^\alpha f=\sum\limits_{k=0}^{\infty}(-1)^{k}\binom{\alpha}{k}f(x-kh),\qquad \,_{\ell}T_hf(x)=f(x-h),
$$ 
which is left-sided finite difference when $h>0$ and 
$$
\bigtriangleup_{h}^{\alpha}[f](x)=(\,_{r}T_h-I)^\alpha f=e^{i\alpha\pi}\sum\limits_{k=0}^{\infty}(-1)^{k}\binom{\alpha}{k}f(x+kh),\qquad \,_{r}T_hf(x)=f(x+h),
$$ 
which is right-sided finite difference when $h>0$.

Thus, we can write  general forms of the finite-difference operator of fractional integro-differentiation for $h>0$:
$$
f^{(\alpha)}_{\ell}(x)=\lim\limits_{h\to 0}\frac{\nabla_{h}^{\alpha}[f](x)}{h^\alpha},\qquad f^{(\alpha)}_{r}(x)=\lim\limits_{h\to 0}\frac{ \bigtriangleup_{h}^{\alpha}[f](x)}{h^\alpha},\qquad \alpha\in\mathbb{R}.
$$
When $\alpha={\,m\,}\in\mathbb{N}\cup\{0\}$ we should use 
$$
\nabla_{h}^{m}[f](x){\,=\,}\sum\limits_{k=0}^{m}(-1)^{k}\binom{m}{k}f(x- kh)\quad \text{and}\quad \bigtriangleup_{h}^{m}[f](x){\,=\,}\sum\limits_{k=0}^{m}(-1)^{k+m}\binom{m}{k}f(x+kh).
$$

\begin{remark} We obtained the Gr\"{u}nwald-Letnikov derivative using  the formula for a degree of the matrix. Next, we generalize this approach in order to obtain the fractional power of the operator $\frac{d}{dg}$, $g{\,=\,}g(x)$ in the finite difference  form.
\end{remark}

\begin{remark} It is known that (see \cite{Podlubny})
	for  $f\in L_1(a,b)$ limit \eqref{GLI} exists 
	for almost everyone $x$ and
	$(\mathcal{I}^{\alpha}_{\ell}f)(x)=({I}^\alpha_{a+}f)(x)$,  $(\mathcal{D}^{\alpha}_{\ell}f)(x)=({D}^\alpha_{a+}f)(x),$
	where $I^\alpha_{a+}$ is a left-sided Riemann-Liouville integral, and $D_{a+}^{\alpha}$
	is a left-sided Riemann-Liouville derivative \cite{SamkKilbMari1993}.
\end{remark}

So we constructed a fractional powers of $\mathcal{D}_{\ell}$ and $\mathcal{D}_{r}$ using a real power $\alpha$ of matrix $\mathcal{A}_n$ and obtained the   derivative of order $m$  when $\alpha{\,=\,}-m$, $m\in\mathbb{N}$, $m{\,\leq\,}n$ and  the   Gr\"{u}nwald-Letnikov   derivative, when $\alpha\neq-m$, $m\in\mathbb{N}$. Our goal now is to generalise this approach and construct a fractional power of $\frac{df}{dg}$ in the finite difference form, where $g$ is an appropriate function. 

\section{Natural power of  two-band matrices}\label{NP}

We want to combine Balakrishnan's approach to the power of an operator \eqref{BF00} and matrix approach. As we saw in formula \eqref{BF00}  in order to construct the fractional power of the operator $(-A)$ we need to know its semigroup 
$$T_t=e^{-tA}=\sum\limits_{k=0}^\infty \frac{(-t)^k}{k!}A^k.$$ 
So we start from  obtaining the power  $A^k$ of two-band matrix, generalizing
\eqref{ed}.  Natural powers of some special upper triangular matrices were constructed in \cite{Khetchatturat}. 

The complete homogeneous symmetric polynomial of degree $q$ in $m$ variables $a_1,...,a_m$ 
written $\mathscr{H}_q(a_1,...,a_m)$ for $q=0,1,2,3,...$  is defined by \cite{Khetchatturat}
$$
\mathscr{H}_q(a_1,a_2,...,a_m)=\sum\limits_{i_1+i_2+...+i_m=q}a_1^{i_1}\cdot a_2^{i_2}\cdot...\cdot a_m^{i_m},
$$
where $i_p\geq 0$, $p=1,...,m$. When an exponent is zero, the corresponding power variable is taken to be $1$.

Another form of the complete homogeneous symmetric polynomial of degree $k$ in $m$ variables is the sum of all monomials of total
degree $k$ in the variables
$$
\mathscr{H}_q(a_1,a_2,...,a_m)=\sum\limits_{1\leq i_1\leq i_2\leq...\leq i_q\leq m}a_{i_1}\cdot a_{i_2}\cdot...\cdot a_{i_q}.
$$

Using results from \cite{Khetchatturat,Cornelius,Bhatnagar} we can write  the complete homogeneous symmetric polynomial 
$\mathscr{H}_{k-m+s}(a_s,a_{s+1}...,a_m)$, $s<m$
as the sum of rational functions:
\begin{equation}\label{tai0}
	\mathscr{H}_{k-m+s}(a_s,a_{s+1}...,a_m)=\sum\limits_{j=s}^m\frac{a_j^k}{\prod\limits_{i=s,j\neq i}^{m}(a_j-a_i)}.
\end{equation}
The identity \eqref{tai0} for $s=1$ is known as Sylvester's identity.

It is known that 
$$
\mathscr{H}_q(1,v,v^2,...,v^{m-1})={m+q-1 \choose q}_{v},
$$ 
where ${p \choose q}_{v}$ is the Gaussian binomial coefficient defined by 
$$
{p \choose r}_{v}={\frac {(1-v^{p})(1-v^{p-1})\cdots (1-v^{p-r+1})}{(1-v)(1-v^{2})\cdots (1-v^{r})}},
$$
where $p$ and $r$ are non-negative integers (see \cite{Exton}).

The evaluation of a Gaussian binomial coefficient at $q=1$ is  
\begin{equation}\label{Binom}
	\lim\limits_{q\to 1}{\binom {p}{r}}_{q}={\binom {p}{r}}=\frac{p(p-1)...(p-r+1)}{r!}. 
\end{equation}

Stirling numbers of the second kind are given by formula
$$
\left\{{n \atop q}\right\}={\frac {1}{q!}}\sum _{i=1}^{q}(-1)^{q-i}{\binom{q}{i}}i^{n}=\sum _{i=1}^{q}{\frac {(-1)^{q-i}}{(q-i)!i!}}i^{n}\qquad 1\leq q\leq n,\qquad n\geq 1.
$$
The evaluation at integers of complete symmetric homogeneous polynomials is related to Stirling numbers of the second kind:
$$
\mathscr{H}_{q}(1,2,\ldots ,m)=\left\{{\begin{matrix}q+m\\m\end{matrix}}\right\}.
$$

\begin{lemma}
	The next formula is valid:
	\begin{equation}\label{tai01}
		\mathscr{H}_{q}(a_1,a_2,...,a_m)=\mathscr{H}_{q}(a_1,a_2,...,a_{m-1})+a_{m}\mathscr{H}_{q-1}(a_1,a_2,...,a_{m}).
	\end{equation}
\end{lemma}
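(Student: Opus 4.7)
The plan is to give a direct combinatorial proof by partitioning the defining sum for $\mathscr{H}_q(a_1,\dots,a_m)$ according to whether or not the variable $a_m$ actually appears in a given monomial.

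First, I would recall the definition used in the paper,
$$
\mathscr{H}_q(a_1,\dots,a_m)=\sum_{i_1+i_2+\cdots+i_m=q} a_1^{i_1}a_2^{i_2}\cdots a_m^{i_m},
\qquad i_p\ge 0,
$$
and split the index set $\{(i_1,\dots,i_m):i_p\ge 0,\ \sum i_p=q\}$ into two disjoint pieces: the tuples with $i_m=0$ and the tuples with $i_m\ge 1$.

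The first piece contributes exactly
$$
\sum_{i_1+\cdots+i_{m-1}=q} a_1^{i_1}\cdots a_{m-1}^{i_{m-1}}=\mathscr{H}_q(a_1,\dots,a_{m-1}),
$$
since setting $i_m=0$ removes $a_m$ and drops the last variable from the polynomial. For the second piece I would perform the change of summation variable $i_m=j_m+1$ with $j_m\ge 0$, so that $i_1+\cdots+i_{m-1}+j_m=q-1$; factoring one $a_m$ out of each monomial converts this partial sum into
$$
a_m\sum_{i_1+\cdots+i_{m-1}+j_m=q-1} a_1^{i_1}\cdots a_{m-1}^{i_{m-1}} a_m^{j_m}=a_m\,\mathscr{H}_{q-1}(a_1,\dots,a_m).
$$
Adding the two contributions yields \eqref{tai01}.

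There is no serious obstacle: the identity is the elementary recursion behind the generating function $\prod_{p=1}^{m}(1-a_p t)^{-1}=\sum_{q\ge 0}\mathscr{H}_q(a_1,\dots,a_m)t^q$, and the combinatorial splitting above is the coefficient-wise version of the factorization $\prod_{p=1}^{m}(1-a_pt)^{-1}=(1-a_mt)^{-1}\prod_{p=1}^{m-1}(1-a_pt)^{-1}$. The only point worth stating explicitly in the write-up is the bijection between index tuples with $i_m\ge 1$ and arbitrary index tuples summing to $q-1$, which justifies that all monomials on the right-hand side are accounted for exactly once.
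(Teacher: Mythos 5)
Your proof is correct, but it follows a genuinely different route from the paper. The paper derives the recursion by first invoking the rational-function (Sylvester-type) representation \eqref{tai0}, writing both $\mathscr{H}_{q}(a_1,\dots,a_{m-1})$ and $a_m\mathscr{H}_{q-1}(a_1,\dots,a_m)$ as sums of terms $a_\ell^{\,\cdot}/\prod_{s\neq\ell}(a_\ell-a_s)$ and then combining the fractions termwise until the right-hand side of \eqref{tai0} for $\mathscr{H}_q(a_1,\dots,a_m)$ reappears. You instead work directly from the monomial definition, partitioning the index tuples according to $i_m=0$ versus $i_m\ge 1$ and shifting $i_m\mapsto i_m-1$ in the second block; this is the coefficient-wise shadow of the factorization $(1-a_mt)^{-1}\prod_{p<m}(1-a_pt)^{-1}$. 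Your argument is more elementary and also slightly more general: it needs no assumption that the $a_i$ are pairwise distinct, whereas the paper's manipulation of \eqref{tai0} implicitly requires distinctness to keep the denominators $a_\ell-a_s$ nonzero (in the paper's application to the matrix ${\mathcal A}_n$ this is harmless, and their route has the side benefit of exercising the same rational-function formalism used later in Theorem \ref{theo01} and Theorem \ref{teopow}). The only cosmetic point to add in your write-up is the convention for the boundary case $q=0$ (where $\mathscr{H}_{-1}$ is taken to be $0$), so the identity is asserted for $q\ge 1$.
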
	
\begin{proof}
	Using formula \eqref{tai0} we obtain
	\begin{multline*}
		\mathscr{H}_{q}(a_1,a_2,...,a_{m-1})+a_{m}\mathscr{H}_{q-1}(a_1,a_2,...,a_{m})=\\
		=\sum\limits_{\ell=1}^{m-1}\frac{a_\ell^{m+q-2}}{\prod\limits_{s=1,s\neq \ell}^{m-1}(a_\ell-a_s)}+\sum\limits_{\ell=1}^m\frac{a_ma_\ell^{m+q-2}}{\prod\limits_{s=1,s\neq \ell}^{m}(a_\ell-a_s)}=\\
		=\sum\limits_{\ell=1}^{m-1}\frac{a_\ell^{m+q-2}}{\prod\limits_{s=1,s\neq \ell}^{m-1}(a_\ell-a_s)}\left(1+\frac{a_m}{a_\ell-a_m} \right) +\frac{a_m^{m+q-1}}{\prod\limits_{s=1,s\neq m}^{m}(a_m-a_s)}=\\
		=\sum\limits_{\ell=1}^{m-1}\frac{a_\ell^{m+q-1}}{\prod\limits_{s=1,s\neq \ell}^{m}(a_\ell-a_s)} +\frac{a_m^{m+q-1}}{\prod\limits_{s=1,s\neq m}^{m}(a_m-a_s)}=\sum\limits_{\ell=1}^{m}\frac{a_\ell^{m+q-1}}{\prod\limits_{s=1,s\neq \ell}^{m}(a_\ell-a_s)}=\mathscr{H}_{q}(a_1,a_2,...,a_m).
	\end{multline*}
\end{proof}

\begin{theorem}\label{teopowder} 
	Let
	\begin{equation}\label{B}
		{\mathcal A}_n=\begin{pmatrix}
			a_1 & -a_1 & 0          & \cdots & 0 \\
			0 & a_2 & -a_2    & \cdots & 0 \\
			0 & 0 & a_3     & \cdots & 0 \\
			\vdots & \vdots & \vdots    & \ddots & \vdots  \\
			0 & 0      & 0&  \cdots & a_n 
		\end{pmatrix},
	\end{equation}
	then the power $k{\,\in\,}\mathbb{N}\cup\{0\}$ of matrix ${\mathcal A}_n$ is
	\begin{equation}\label{PowMat}
		{\mathcal A}_n^k=(p_{sm})_{m=1,s=1}^{n,n},
	\end{equation}
	where
	\begin{equation}\label{PowMatEl}
		p_{sm}=\left\{ \begin{array}{ll}
			0 & \mbox{if\, $m<s$\, and\, $m>k+s$};\\
			a_s^k & \mbox{if\, $m=s$};\\
			(-1)^{m-s}\cdot \mathscr{H}_{k-m+s}(a_s,a_{s+1},...,a_m)\cdot\prod\limits_{i=s}^{m-1}a_i& \mbox{if\,  $s< m\leq k+s$}.	
		\end{array} \right.
	\end{equation}
\end{theorem}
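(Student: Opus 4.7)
The approach is induction on $k \in \mathbb{N} \cup \{0\}$. The base cases $k=0$ (identity matrix) and $k=1$ (the matrix $\mathcal{A}_n$ itself) are routine verifications using $\mathscr{H}_0 \equiv 1$ and the empty product convention. For the inductive step, I exploit the very sparse row structure of $\mathcal{A}_n$: in row $s$ the only nonzero entries are $(\mathcal{A}_n)_{s,s} = a_s$ and $(\mathcal{A}_n)_{s,s+1} = -a_s$. Consequently
\[
(\mathcal{A}_n^{k+1})_{s,m} \;=\; \sum_{j} (\mathcal{A}_n)_{s,j}(\mathcal{A}_n^k)_{j,m} \;=\; a_s \bigl[(\mathcal{A}_n^k)_{s,m} - (\mathcal{A}_n^k)_{s+1,m}\bigr],
\]
a two-term recurrence relating one entry of $\mathcal{A}_n^{k+1}$ to two entries of $\mathcal{A}_n^k$.

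Assuming the claimed formula \eqref{PowMatEl} for $\mathcal{A}_n^k$, I would substitute into the recurrence and show it yields the same formula at level $k{+}1$. In the bulk range $s+1 < m \leq k+s$, both $(\mathcal{A}_n^k)_{s,m}$ and $(\mathcal{A}_n^k)_{s+1,m}$ are given by the nontrivial case of \eqref{PowMatEl}. After factoring out the common $(-1)^{m-s} a_s \prod_{i=s+1}^{m-1} a_i$, the required equality reduces to the single identity
\[
\mathscr{H}_q(a_s, a_{s+1},\ldots, a_m) \;=\; a_s\, \mathscr{H}_{q-1}(a_s, a_{s+1},\ldots, a_m) + \mathscr{H}_q(a_{s+1}, \ldots, a_m),
\]
with $q = k{+}1{-}m{+}s$. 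This is the ``split off the first variable'' analogue of Lemma~1, and follows either from the symmetry of $\mathscr{H}_q$ in its arguments combined with Lemma~1, or directly from the monomial definition by partitioning terms according to whether $a_s$ appears.

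It remains to dispatch the boundary cases where one of the two entries on the right-hand side of the recurrence degenerates. Specifically: (i) for $m = s$ the recurrence gives $a_s \cdot a_s^k = a_s^{k+1}$, matching the diagonal value at level $k{+}1$; (ii) for $m = s+1$ one applies the same symmetric-function identity restricted to two variables, which is immediate; (iii) the new off-band entry $m = k+s+1$ arises solely from $-a_s (\mathcal{A}_n^k)_{s+1,k+s+1}$, and using $\mathscr{H}_0 \equiv 1$ a direct computation produces exactly $(-1)^{k+1}\prod_{i=s}^{k+s} a_i$, in agreement with \eqref{PowMatEl}; (iv) for $m < s$ or $m > k+s+1$ both contributing entries are zero.

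The main obstacle is organizational rather than conceptual: one must carefully track the ranges of validity of the three cases in \eqref{PowMatEl} for both $(\mathcal{A}_n^k)_{s,m}$ and $(\mathcal{A}_n^k)_{s+1,m}$, since these ranges are shifted by one, and one must keep the degree parameter $q = k{+}1{-}m{+}s$ consistent with the shift $k \mapsto k+1$. Once this bookkeeping is arranged, every configuration collapses to either a trivial identity or the single symmetric-function recurrence displayed above, and Lemma~1 does all of the real work.
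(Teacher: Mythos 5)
Your proposal is correct and takes essentially the same route as the paper: induction on $k$ with the recurrence \eqref{tai01} for complete homogeneous symmetric polynomials doing the real work, the only difference being that you expand $\mathcal{A}_n\cdot\mathcal{A}_n^k$ and therefore need the ``split off the first variable'' form of the identity (which is Lemma~1 after using the symmetry of $\mathscr{H}_q$), whereas the paper expands $\mathcal{A}_n^k\cdot\mathcal{A}_n$ and applies Lemma~1 verbatim. Your handling of the boundary entries ($m=s$, $m=s+1$, the new band entry $m=k+s+1$, and the zero region) is in fact spelled out more explicitly than in the paper's proof.
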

\begin{proof}
	We will use mathematic induction. For the base step, $k=1$
	$$
	p_{sm}=\left\{ \begin{array}{ll}
		0 & \mbox{if\, $m<s$\, and\, $m>1+s$};\\
		a_s & \mbox{if\, $m=s$};\\
		-a_{s}& \mbox{if\,  $m=1+s$}	
	\end{array} \right.
	$$
	that gives   ${\mathcal A}_n{\,=\,}{\mathcal A}_n^1$.
	Suppose that for $k{\,<\,}n$ formula \eqref{PowMat} is valid. Let us consider $(k+1){\,<\,}n$. Applying formula \eqref{tai01} we obtain
	\begin{multline*}
		{\mathcal A}_n^{k+1}={\mathcal A}_n^k\cdot {\mathcal A}_n=\\=\begin{pmatrix}
			a_1^k & -a_1\cdot \mathscr{H}_{k-1}(a_1,a_2) & a_1\cdot a_2\cdot \mathscr{H}_{k-2}(a_1,a_2,a_3)          & \cdots & 0 \\
			0 & a_2^k & -a_2\cdot \mathscr{H}_{k-1}(a_2,a_3)     & \cdots & 0 \\
			0 & 0 & a_3^k     & \cdots & 0 \\
			\vdots & \vdots & \vdots    & \ddots & \vdots  \\
			0 & 0      & 0&  \cdots & a_n^k 
		\end{pmatrix} \cdot\begin{pmatrix}
			a_1 & -a_1 & 0          & \cdots & 0 \\
			0 & a_2 & -a_2    & \cdots & 0 \\
			0 & 0 & a_3     & \cdots & 0 \\
			\vdots & \vdots & \vdots    & \ddots & \vdots  \\
			0 & 0      & 0&  \cdots & a_n 
		\end{pmatrix} =\\
		=\begin{pmatrix}
			a_1^{k+1} & -a_1\cdot \mathscr{H}_{k}(a_1,a_2) & a_1\cdot a_2\cdot \mathscr{H}_{k-1}(a_1,a_2,a_3)          & \cdots & 0 \\
			0 & a_2^{k+1} & -a_2\cdot \mathscr{H}_{k}(a_2,a_3)     & \cdots & 0 \\
			0 & 0 & a_3^{k+1}     & \cdots & 0 \\
			\vdots & \vdots & \vdots    & \ddots & \vdots  \\
			0 & 0      & 0&  \cdots & a_n^{k+1} 
		\end{pmatrix}, 
	\end{multline*}
	which is what was required to be proven. 
	The proof for $k\geq n$ is similar. 
\end{proof}

Using \eqref{tai0} we obtain another form of the power ${\mathcal A}_n^k$.

\begin{corollary} 
	The power $k{\,\in\,}\mathbb{N}\cup\{0\}$ of matrix ${\mathcal A}_n$ is
	\begin{equation}\label{PowMat01}
		{\mathcal A}_n^k=(p_{sm})_{m=1,s=1}^{n,n},
	\end{equation}
	where
	\begin{equation}\label{PowMatEl02}
		p_{sm}=\left\{ \begin{array}{ll}
			0 & \mbox{if $m<s$};\\
			\prod\limits_{i=s}^{m-1}a_i\cdot\sum\limits_{j=s}^m\frac{ a_j^k}{\prod\limits_{i=s,j\neq i}^{m}(a_i-a_j)} & \mbox{if  $s< m$};\\
			a_m^k & \mbox{if $m=s$}.	
		\end{array} \right.
	\end{equation}
\end{corollary}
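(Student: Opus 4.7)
The plan is to derive the corollary directly from Theorem \ref{teopowder} by substituting the Sylvester-type identity \eqref{tai0} into the entries $p_{sm}$ and tracking signs. First I would take the case $s<m\le k+s$, where Theorem \ref{teopowder} gives
$$
p_{sm}=(-1)^{m-s}\,\mathscr{H}_{k-m+s}(a_s,a_{s+1},\ldots ,a_m)\prod_{i=s}^{m-1}a_i.
$$
Applying \eqref{tai0} to the complete homogeneous symmetric polynomial rewrites the factor $\mathscr{H}_{k-m+s}(a_s,\ldots,a_m)$ as $\sum_{j=s}^{m} a_j^{k}/\prod_{i=s,\,i\ne j}^{m}(a_j-a_i)$.

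Next I would absorb the prefactor $(-1)^{m-s}$ into the denominator. Each of the products $\prod_{i=s,\,i\ne j}^{m}(a_j-a_i)$ has exactly $m-s$ factors, so flipping the sign of every factor turns it into $\prod_{i=s,\,i\ne j}^{m}(a_i-a_j)$ at the cost of a global $(-1)^{m-s}$, which precisely cancels the sign arising from Theorem \ref{teopowder}. This yields the form stated in \eqref{PowMatEl02} in the range $s<m\le k+s$. The diagonal case $m=s$ and the subdiagonal vanishing $m<s$ carry over directly from Theorem \ref{teopowder}.

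The only delicate point is the range $m>k+s$, where the corollary does not split off a separate case even though Theorem \ref{teopowder} assigns $p_{sm}=0$ there. I would argue that the expression in \eqref{PowMatEl02} in fact vanishes automatically when $k<m-s$: the inner sum is (up to the nonzero factor $\prod_{i=s}^{m-1}a_i$) the classical divided difference of the monomial $t\mapsto t^{k}$ at the nodes $a_s,a_{s+1},\ldots,a_m$, and a divided difference of order $m-s$ applied to a polynomial of degree $k<m-s$ equals zero. Equivalently, one may invoke \eqref{tai0} read in reverse: the sum $\sum_{j=s}^{m}a_j^{k}/\prod_{i\ne j}(a_j-a_i)$ coincides with the coefficient of $t^{m-s}$ in the expansion of $t^{k}$, which is $0$ for $k<m-s$.

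I expect the sign bookkeeping in the second step to be the main notational hazard, since the two denominators $\prod(a_j-a_i)$ and $\prod(a_i-a_j)$ look very similar; the key observation that the number of flipped factors is exactly $m-s$ makes the cancellation clean. Once this identity is in place, the corollary follows without further computation, and no induction is needed beyond the one already used in Theorem \ref{teopowder}.
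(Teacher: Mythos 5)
Your proof is correct and follows essentially the same route as the paper, which obtains the corollary simply by substituting the Sylvester-type identity \eqref{tai0} into the entries of Theorem \ref{teopowder}. Your explicit sign bookkeeping (flipping the $m-s$ factors in the denominator) and the divided-difference argument showing that the expression vanishes automatically when $m>k+s$ are details the paper leaves implicit, and you handle them correctly.
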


\begin{remark}
	Taking into account \eqref{Binom} we get that the power $k$ of the matrix \eqref{ed} for $k\geq n$ is
	matched with \eqref{FracPow00} but was obtained by different way.
\end{remark}

\section{Balakrishnan's approach to the power of an operator}

In this section, we consider Balakrishnan's approach and provide an example demonstrating its feasibility in the  matrix case.

Let $\{T_t\}_{t\geq 0}$ be a contraction semigroup of type $C_0$ (meaning it is a strongly continuous semigroup) on a Banach space $X$ with infinitesimal generator $(A,D(A))$. 

The fractional power $(-A)^\alpha$, $0<\alpha<1$ was defined by the formula (see \cite{Balakrishnan1959,Westphal1970,Yosida1980})
\begin{equation}\label{BF01}
	(-A)^\alpha f=\frac{1}{\Gamma(-\alpha)}\int\limits_0^\infty t^{-\alpha-1}(T_t-I)f(x)dt, \qquad f\in D(A).
\end{equation}
In the case $1<\alpha<\ell$, $\ell=2,3,...$
this formula can be written with the usage of "finite differences"\, $(I-T_t)^\ell$:
\begin{equation}\label{BF02}
	(-A)^\alpha f=\frac{1}{C_\alpha(\ell)}\int\limits_0^\infty t^{-\alpha-1}(I-T_t)^\ell f(x)dt,
\end{equation}
where  $C_\alpha(\ell)=\Gamma(-\alpha)A_\alpha(\ell)$,
$A_\alpha(\ell)=\sum\limits_{k=0}^\ell(-1)^{k-1}\binom {\ell}{k}=\sum\limits_{k=0}^\ell(-1)^{k-1}\frac {\ell!}{k!(\ell-k)!}.$

The negative power of the operator $(-A)$ for $0<\alpha<1$ can be defined by the equation
\begin{equation}\label{BF03}
	(-A)^{-\alpha} f=\frac{1}{\Gamma(\alpha)}\int\limits_0^\infty t^{\alpha-1}T_tf(x)dt.
\end{equation}
In order to get fractional integral of the order $\alpha$ greater than $1$ one can just apply iterated integral to \eqref{BF03}.

\begin{example}
Assume  $(n\times n)$ matrix $ \mathcal{A}_n$ of the form \eqref{ed}.
We can construct a contraction semigroup of type $C_0$  for the operator $(-\mathcal{A}_n)$ by formula $T_t^n=e^{-t\mathcal{A}_n}=\sum\limits_{k=0}^\infty\frac{(-t)^k}{k!}\mathcal{A}^k_n$. So, using \eqref{PowMat}, we obtain
$$
T_t^n=e^{-t\mathcal{A}_n}=\sum\limits_{k=0}^\infty\frac{(-t)^k}{k!}\mathcal{A}^k_n=$$
$$
=\sum\limits_{k=0}^\infty\frac{(-t)^k}{h^kk!}  \begin{pmatrix}
	1         & -k &     	\frac{k(k-1)}{2}   & \cdots & (-1)^{n-2}\frac{k(k-1)...(k-n+3)}{(n-2)!} & (-1)^{n-1}\frac{k(k-1)...(k-n+2)}{(n-1)!}\\
	0 &      1         & -k &     	 \cdots & (-1)^{n-3}\frac{k(k-1)...(k-n+4)}{(n-3)!} & (-1)^{n-2}\frac{k(k-1)...(k-n+3)}{(n-2)!}\\
	\vdots   &  \vdots & \vdots & \cdots & \vdots & \vdots\\
	0   & 0   & 0&\cdots & 1 & -k\\
	0   & 0   & 0 &  \cdots & 0 & 1
\end{pmatrix}=
$$
$$
=\begin{pmatrix}
	e^{-\frac{t}{h}}         & 	\frac{t}{h}e^{-\frac{t}{h}} &     \frac{1}{2}	\left( \frac{t}{h}\right)^2 e^{-\frac{t}{h}}   & \cdots &  	 \frac{1}{(n-2)!}\left( \frac{t}{h}\right)^{n-2}	e^{-\frac{t}{h}}  & \frac{1}{(n-1)!}\left( \frac{t}{h}\right)^{n-1}	e^{-\frac{t}{h}}\\
	0 & e^{-\frac{t}{h}}         & 	\frac{t}{h}e^{-\frac{t}{h}} &     	 \cdots &  	 	\frac{1}{(n-3)!}\left( \frac{t}{h}\right)^{n-3}	e^{-\frac{t}{h}} & \frac{1}{(n-2)!}\left( \frac{t}{h}\right)^{n-2}	e^{-\frac{t}{h}}\\
	\vdots   &  \vdots & \vdots & \cdots   & \vdots & \vdots\\
	0   & 0   & 0&\cdots & e^{-\frac{t}{h}} & 	\frac{t}{h}e^{-\frac{t}{h}}\\
	0   & 0   & 0 &  \cdots & 0 & e^{-\frac{t}{h}}
\end{pmatrix}.
$$

By formula \eqref{BF01} for $\alpha\in(0,1)$ we get
$$
\mathcal{A}_n^\alpha f=\frac{1}{\Gamma(-\alpha)}\int\limits_0^\infty t^{-\alpha-1}(e^{-t\mathcal{A}_n}-I)f(x)dt.
$$

Since
$$
\frac{1}{\Gamma(-\alpha)}\int\limits_0^\infty t^{-\alpha-1}(e^{-\frac{t}{h}}-1 )dt=\frac{1}{h^\alpha},
$$
and for $p+1\leq n$
$$
\frac{1}{\Gamma(-\alpha)}\frac{1}{h^{n-p}(n-p)!}  
\int\limits_0^\infty t^{-\alpha-1+n-p} e^{-\frac{t}{h}}dt=
$$
$$
= \frac{\Gamma (n-p-\alpha)}{h^\alpha\Gamma(-\alpha)(n-p)!}=\frac{(-1)^{n-p}\alpha(\alpha-1)...(\alpha-(n-p))}{h^\alpha(n-p)!},
$$
we get
$$
\mathcal{A}^\alpha_n=\frac{1}{h^\alpha}\begin{pmatrix}
	1         & -\alpha &     	\frac{\alpha(\alpha-1)}{2}   & \cdots & (-1)^{n-2}\frac{\alpha(\alpha-1)...(\alpha-n+3)}{(n-2)!} & (-1)^{n-1}\frac{\alpha(\alpha-1)...(\alpha-n+2)}{(n-1)!}\\
	0 &      1         & -\alpha &     	 \cdots & (-1)^{n-3}\frac{\alpha(\alpha-1)...(\alpha-n+4)}{(n-3)!} & (-1)^{n-2}\frac{\alpha(\alpha-1)...(\alpha-n+3)}{(n-2)!}\\
	\vdots   &  \vdots & \vdots & \cdots & \vdots & \vdots\\
	0   & 0   & 0&\cdots & 1 & -\alpha\\
	0   & 0   & 0 &  \cdots & 0 & 1
\end{pmatrix}.
$$
This formula  coincides with \eqref{FracPow0}.  
Thus, the same formula is derived in two different ways: by raising the matrix to a fractional power and by applying Balakrishnan's formula. 
\end{example}

\section{Rate of convergence: general approach}

In this section, we derive estimates for the rate of convergence of linear positive operators, given the semigroup's estimate. Understanding the rate of convergence of a sequence of operators is essential for both the underlying theory and its applications in fractional calculus. In \cite{KolokoltsovBook}, a comprehensive theorem on the convergence of Markov semigroups and processes was established (see p. 352, Theorem 8.1.1). Here, we derive a corollary from this theorem that applies to fractional powers of operators, including the rate of convergence.

Let $\Omega$   be a locally compact, separable metric space. Here, we only meet  $\Omega=\mathbb{R}^n$ or its subsets equipped with the Euclidean norm.

$C_b(\Omega)$ is the Banach space of 
bounded continuous functions on $\Omega$ equipped with the sup-norm $\|f\|=\sup\limits_{x\in \Omega}|f(x)|$.

$C_\infty(\Omega){\,\subset\,}C_b(\Omega)$  consists of $f$ such that $\lim\limits_{x\to\infty}f(x){\,=\,}0$, i.e. for all $\varepsilon$ there exists a compact
set $K{\,:\,}\sup\limits_{x\notin K}|f(x)|{\,<\,}\varepsilon$.

A semigroup of positive contraction linear operator $\{T_t\}_{t\geq 0}$  on $C_\infty(\Omega)$ 
is called {\bf Feller semigroup} if it satisfies the
following regularity conditions:
\begin{itemize}
	\item $T_tC_\infty(\Omega)\subset C_\infty(\Omega)$, $t\geq 0$;
	\item $T_tf(x)\to f(x)$, $t\downarrow0$, $\forall f\in C_\infty(\Omega)$.
\end{itemize}    
That gives strong continuity $\|T_t f-f\|\to 0$, $t\downarrow 0$, $\forall f\in C_\infty(\Omega)$.

It is well known that  each $C_0$-semigroup $T_t$ has the following estimate (see \cite{Yosida1980})
\begin{equation}\label{SemOts}
	\|T_t\|\leq M e^{mt},\qquad 0\leq t<\infty,
\end{equation}
where $M\in\mathbb{R}$, $M\geq 1$.

For any $h>0$, let $\Omega_h\subset\Omega$  be a closed subset. In
basic examples $\Omega=\mathbb{R}^n$ and $\Omega_h$ either coincides with $\Omega$ or is a lattice in $\mathbb{R}^n$ with
steps in each direction depending on $h$. By $\pi_h$ we shall denote here the projection
$C_\infty(\Omega)\mapsto C_\infty(\Omega_h)$ obtained by restriction.

Here we consider semigroup $T_t$ with its generator $A$ and
the sequence $T_t^h$  with   generators $A_h$.
We show that if $A_h\pi_h$ converges to $\pi_hA$ as $h\to 0$ then
$T_t^h\pi_h$ converges to $\pi_hT_t$  and
$(-A_h)^\alpha \pi_h$
converges to $\pi_h(-A)^\alpha$ as $h\to 0$, where fractional powers are given in the form of \eqref{BF01} for
$\alpha\in(0,1)$.

Next we will consider $\alpha\in(0,1)$.

\begin{theorem}\label{T2}
	Let $\{T_t\}_{t\geq 0}$ be a Feller semigroup
	in $C_\infty(\Omega)$  with generator $A$ and $B$ be a dense invariant (under $T_t$) 
	subspace of the domain of $A$ such that $B$ is itself a normed space under a norm $\|\cdot\|_B$ and
	\begin{equation}\label{Tt}
		\|T_tf\|_B\leq c(t)\|f\|_B
	\end{equation}
	for all $f$, for some non-decreasing continuous    function $c(t)$ on $\mathbb{R}_+$.
	
	Let $T^h_t$ be a Feller semigroup in $C_\infty(\Omega_h)$  with generator $A_h$ having domain
	containing $\pi_hB$ such that	
	\begin{equation}\label{Ah}
		\|(A_h\pi_h-\pi_h A)f\|\leq \omega(h)\|f\|_B
	\end{equation}
	with $\omega(h)=o(1)$   as $h\to 0$ (i.e. $\omega(h)\to 0$ as $h\to 0$), $\omega(h)\in(0,1)$. 
	Then 
	\begin{enumerate}
		\item for any $t>0$
		and $f\in C_\infty(\Omega)$ the sequence $T_s^hf$
		converges to $T_sf$ as $h\to 0$ uniformly for
		$s\in[0,t]$, in the sense that
		\begin{equation}\label{supTh}
			\sup\limits_{s\leq t}||(T_s^h\pi_h-\pi_hT_s)f||\to 0,\qquad h\to 0
		\end{equation}	
		(in case $\Omega_h=\Omega$  this is the usual convergence in $C_\infty(\Omega)$, 
		\item if
		$f\in B$, then 
		$$
		\sup\limits_{s\leq t}||(T_s^h\pi_h-  \pi_hT_s)f||\leq \omega(h)\|f\|_B\int\limits_0^tc(s)ds,
		$$
		\item  for  $m\leq 0$ in \eqref{SemOts}
		$$
		\|((-{A}_h)^\alpha\pi_h-\pi_h(-{A})^\alpha)f\|
		\leq C_1\|f\|_B \omega^\alpha(h),
		$$
		where $C_1=C_1(M,\alpha)$ is  some constant.
		\item  	 for  $m>0$ in \eqref{SemOts}
		$$
		\|((-{A}_h)^\alpha\pi_h-\pi_h(-{A})^\alpha)f\|
		\leq C_2\|f\|_B \left(\ln\frac{1}{\omega(h)}\right)^{-\alpha},
		$$
		where $C_2=C_2(M,m,\alpha)$ is  some constant.
		\item   if $\|T_t\|\leq M (1+t^p)$,  $p>0$
		$$
		\|((-{A}_h)^\alpha\pi_h-\pi_h(-{A})^\alpha)f\|
		\leq C_3\omega^{\frac{\alpha}{p+1}}(h)\|f\|_B ,
		$$
		where  $C_3=C_3(M,p,\alpha)$ is some constant.
	\end{enumerate}
\end{theorem}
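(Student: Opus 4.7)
The proof rests on a single semigroup-perturbation identity, from which parts 1 and 2 follow directly, while parts 3 through 5 are obtained via Balakrishnan's representation combined with an optimally placed split of the integral over $t$.

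First I would establish the variation-of-constants identity
\begin{equation*}
T_s^h \pi_h f - \pi_h T_s f = \int_0^s T_{s-u}^h (A_h \pi_h - \pi_h A) T_u f\,du, \qquad f \in B,
\end{equation*}
by differentiating $u \mapsto T_{s-u}^h \pi_h T_u f$ on $[0,s]$ and using $\tfrac{d}{du} T_{s-u}^h = -A_h T_{s-u}^h$ together with $\tfrac{d}{du} T_u f = A T_u f$; both steps are legitimate because $B$ is $T_u$-invariant (so $T_u f \in B \subset D(A)$) and $\pi_h B$ lies in the domain of $A_h$. Taking sup-norms, using $\|T_{s-u}^h\| \leq 1$ (Feller contraction), assumption \eqref{Ah}, and the $B$-bound \eqref{Tt}, yields part 2 after taking $\sup_{s \leq t}$. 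Part 1 then follows by a standard $3\varepsilon$-argument: for $f \in C_\infty(\Omega)$ choose $g \in B$ with $\|f - g\| < \varepsilon$ (density of $B$), and estimate $\|(T_s^h\pi_h - \pi_h T_s)f\| \leq \|(T_s^h\pi_h - \pi_h T_s)g\| + 2\varepsilon$ uniformly for $s \leq t$, the first summand being controlled by part 2.

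For parts 3--5 I would apply Balakrishnan's formula \eqref{BF01} to both $(-A)^\alpha$ and $(-A_h)^\alpha$, obtaining
\begin{equation*}
((-A_h)^\alpha \pi_h - \pi_h (-A)^\alpha)f = \frac{1}{\Gamma(-\alpha)}\int_0^\infty t^{-\alpha-1}(T_t^h \pi_h - \pi_h T_t)f\,dt,
\end{equation*}
and estimate the integrand by the smaller of two competing bounds: from part 2, the quantity $\omega(h)\|f\|_B \int_0^t c(s)\,ds$, which is small for small $t$; and directly $2\|f\|$, coming from the Feller sup-norm contraction $\|T_t^h\|, \|T_t\| \leq 1$. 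Splitting $\int_0^\infty = \int_0^{t_0} + \int_{t_0}^\infty$ at the crossover $t_0$ where the two bounds coincide and optimizing handles all three cases.

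In part 3 with $c(t) \leq M$ (from $m \leq 0$), the balance $\omega(h) M t_0 \sim 2\|f\|/\|f\|_B$ gives $t_0 \sim \omega(h)^{-1}$, and the two pieces $\int_0^{t_0} t^{-\alpha-1}\omega(h) Mt\,dt$ and $\int_{t_0}^\infty t^{-\alpha-1}\cdot 2\|f\|\,dt$ both produce order $\omega(h)^\alpha$. In part 5 with $\int_0^t c(s)\,ds = O(t^{p+1})$, the balance gives $t_0 \sim \omega(h)^{-1/(p+1)}$ and both integrals contribute order $\omega(h)^{\alpha/(p+1)}$. Part 4 is the most delicate: with $c(t) = Me^{mt}$, $m > 0$, the crossover is $t_0 \sim m^{-1}\ln(1/\omega(h))$, so $I_2 \leq C t_0^{-\alpha} \sim (\ln(1/\omega(h)))^{-\alpha}$ is immediate. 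The main obstacle is controlling $I_1 = (\omega(h) M/m)\int_0^{t_0} t^{-\alpha-1}(e^{mt}-1)\,dt$ without losing a logarithmic factor; naively replacing $e^{mt}-1$ by $e^{mt_0}$ on $[0,t_0]$ costs an extra $\ln(1/\omega(h))$ and destroys the rate. Instead, after substituting $u = mt$ one needs the sharp Laplace-type asymptotic
\begin{equation*}
\int_0^A u^{-\alpha-1}(e^u - 1)\,du = O\bigl(A^{-\alpha-1} e^A\bigr) \qquad (A \to \infty),
\end{equation*}
obtained by integration by parts; applied at $A = mt_0$ with $e^{mt_0} \sim 1/\omega(h)$ this gives $I_1 \leq C(\ln(1/\omega(h)))^{-\alpha-1}$, dominated by $I_2$. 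Collecting the constants depending on $M, m, p, \alpha$ then yields $C_1, C_2, C_3$ of the stated form.
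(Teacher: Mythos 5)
Your proposal is correct and follows essentially the same route as the paper: the same Duhamel/variation-of-constants identity gives parts 1--2, and parts 3--5 come from splitting the Balakrishnan integral at a crossover point $K(\omega)$, using the part-2 bound on $[0,K]$ and the uniform bound $2\|f\|$ on the tail, with the same choices $K\sim\omega^{-1}$ and $K\sim\omega^{-1/(p+1)}$ in parts 3 and 5. The only real deviation is in part 4: you split at the full crossover $t_0\sim m^{-1}\ln(1/\omega)$ and invoke the sharp Laplace-type estimate $\int_0^A u^{-\alpha-1}(e^u-1)\,du=O(A^{-\alpha-1}e^A)$ to avoid the extra logarithm, whereas the paper splits at $K=\frac{1}{2m}\ln\frac{1}{\omega}$ and uses the crude bound $\int_0^K t^{-\alpha}e^{mt}dt\leq\frac{K^{1-\alpha}}{1-\alpha}e^{mK}$, absorbing the surplus into the bounded factor $\sqrt{\omega}\ln\frac{1}{\omega}$; both devices yield the same $(\ln\frac{1}{\omega})^{-\alpha}$ rate, and your version also supplies the density argument for part 1 that the paper leaves implicit.
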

\begin{proof}
	Let $f\in C_\infty(\Omega)$, then
	$$
	(  T_t^h\pi_h-  \pi_hT_t)f=(e^{-t {A}_h}\pi_h-\pi_he^{-t {A}})f=-e^{-(t-s) {A}_h}\pi_h e^{-s {A}}f|_{s=0}^t=
	$$
	$$
	=-\int\limits_0^t \frac{d}{ds}e^{-(t-s) {A}_h}\pi_h e^{-s {A}^{-1}}f ds=\int\limits_0^t  e^{-(t-s) {A}_h} ( {A}_h\pi_h-\pi_h{A})e^{-s{A}}f ds
	$$
	and
	\begin{multline}\label{En01}
		\|(  T_t^h\pi_h-  \pi_hT_t)f\| \leq \int\limits_0^t  \|( {A}_h\pi_h-\pi_h{A})e^{-s{A}}f\|  ds\leq  \int\limits_0^t\omega(h)\|e^{-s{A}}f\|_B ds=\\
		=\int\limits_0^t\omega(h)\|T_sf\|_B ds  \leq  \omega(h)\|f\|_B\int\limits_0^tc(s)ds.
	\end{multline}
	
	Therefore, by \eqref{BF01} for $\alpha\in(0,1)$  we get
	\begin{multline*}
		\|((-{A}_h)^\alpha\pi_h-\pi_h(-{A})^\alpha)f\|\leq \left|\frac{1}{\Gamma(-\alpha)}\right|\int\limits_0^\infty t^{-\alpha-1}\|(T_t^h\pi_h- \pi_hT_t)f\|dt =\\
		=\left|\frac{1}{\Gamma(-\alpha)}\right|\left( \int\limits_0^{K(\omega)} t^{-\alpha-1}\|(T_t^h\pi_h- \pi_hT_t)f\|dt+\int\limits_{K(\omega)}^\infty t^{-\alpha-1}\|(T_t^h\pi_h- \pi_hT_t)f\|dt \right), \\	
	\end{multline*}
	where $K(\omega)=K(\omega(h))$ is some positive function of $\omega(h)$.
	Applying inequality \eqref{En01} to the first integral by $(0,K(\omega))$ and taking into account strong continuity of $T_t$ we obtain
	\begin{multline}\label{Ner}
		\|((-{A}_h)^\alpha\pi_h-\pi_h(-{A})^\alpha)f\|\leq\\
		\leq \left|\frac{1}{\Gamma(-\alpha)}\right|\|f\|_B \left( \omega(h) \left|\int\limits_0^{K(\omega)} t^{-\alpha-1}\left( \int\limits_0^tc(s)ds\right) dt\right|+ 2\left|\,\int\limits_{K(\omega)}^\infty t^{-\alpha-1}dt\right|\right)=\\
		=\left|\frac{1}{\Gamma(-\alpha)}\right|\|f\|_B \left( \omega(h) \left|\int\limits_0^{K(\omega)} t^{-\alpha-1}\left( \int\limits_0^tc(s)ds\right) dt\right|+ \frac{2}{\alpha}K^{-\alpha}(\omega) \right).
	\end{multline}

	Taking into account \eqref{SemOts} and \eqref{Tt} we get $0\leq c(t)\leq Me^{mt}$, $M>0$, $m<\infty$
	and
	$$
	\int\limits_0^tc(s)ds\leq M\int\limits_0^t e^{ms}ds=\frac{M}{m}(e^{mt}-1)\leq Mte^{mt},\qquad t>0.
	$$
	Let first $m\leq 0$, then
	$$
	\int\limits_0^tc(s)ds\leq  Mt,\qquad t>0
	$$
	and \eqref{Ner} gives
	\begin{multline*}
		\|((-{A}_h)^\alpha\pi_h-\pi_h(-{A})^\alpha)f\|
		\leq  \left|\frac{1}{\Gamma(-\alpha)}\right|\|f\|_B \left(M \omega(h) \int\limits_0^{K(\omega)} t^{-\alpha}dt+ \frac{2}{\alpha}K^{-\alpha}(\omega) \right)=\\
		=\left|\frac{1}{\Gamma(-\alpha)}\right|\|f\|_B  \left(\frac{M}{1-\alpha}\omega(h)K^{1-\alpha}(\omega)+ \frac{2}{\alpha}K^{-\alpha}(\omega) \right).
	\end{multline*}	
	If we take $K(\omega)=\frac{1}{\omega(h)}$ we can write
	$$
	\|((-{A}_h)^\alpha\pi_h-\pi_h(-{A})^\alpha)f\|
	\leq \left|\frac{1}{\Gamma(-\alpha)}\right|  \left(\frac{M}{1-\alpha} + \frac{2}{\alpha}  \right)\|f\|_B \omega^\alpha(h)=C_1(M,\alpha)\|f\|_B\omega^\alpha(h).
	$$

	Now, when $m>0$ we obtain
	\begin{multline*}
		\|((-{A}_h)^\alpha\pi_h-\pi_h(-{A})^\alpha)f\|
		\leq  \left|\frac{1}{\Gamma(-\alpha)}\right|\|f\|_B \left(M \omega(h) \left|\int\limits_0^{K(\omega)} t^{-\alpha}e^{mt} dt\right|+ \frac{2}{\alpha}K^{-\alpha}(\omega) \right)\leq\\
		\leq\left|\frac{1}{\Gamma(-\alpha)}\right|\|f\|_B  \left(\frac{M}{1-\alpha}\omega(h)K^{1-\alpha}(\omega)e^{mK(\omega)}+ \frac{2}{\alpha}K^{-\alpha}(\omega) \right).
	\end{multline*}	
	Let $0<\omega(h)<1$. When $K(\omega)=\frac{1}{2m}\ln\frac{1}{\omega(h)}>0$ we get	
	\begin{multline*}
		\|((-{A}_h)^\alpha\pi_h-\pi_h(-{A})^\alpha)f\|\leq\\
		\leq \left|\frac{1}{\Gamma(-\alpha)}\right|\|f\|_B  \left(\frac{M}{1-\alpha}\omega(h)\left(\frac{1}{2m}\ln\frac{1}{\omega(h)}\right)^{1-\alpha} \frac{1}{\sqrt{\omega(h)}}+ \frac{2}{\alpha}\left(\frac{1}{2m}\ln\frac{1}{\omega(h)}\right)^{-\alpha}\right)=\\
		=\left|\frac{1}{\Gamma(-\alpha)}\right|\|f\|_B \left(\frac{1}{2m}\ln\frac{1}{\omega(h)}\right)^{-\alpha} \left(\frac{M}{2m(1-\alpha)}\sqrt{\omega(h)}\ln\frac{1}{\omega(h)}+ \frac{2}{\alpha}\right)\leq\\
		\leq\left|\frac{1}{\Gamma(-\alpha)}\right|\|f\|_B \left(\frac{1}{2m}\ln\frac{1}{\omega(h)}\right)^{-\alpha} \left(\frac{M}{2m(1-\alpha)}+ \frac{2}{\alpha}\right)\leq C_2(M,m,\alpha)\|f\|_B \left(\ln\frac{1}{\omega(h)}\right)^{-\alpha}.
	\end{multline*}
	
	Finally, we consider the case $\|T_t\|\leq M(1+t^p)$,  $p>0$. Then $0\leq c(t)\leq M(1+t^p)$, 
	$$
	\int\limits_0^tc(s)ds\leq M\int\limits_0^t (1+s^p)ds=M\left(t+\frac{t^{p+1}}{p+1}\right) ,\qquad t>0.
	$$
	and  \eqref{Ner} gives
	\begin{multline*}
		\|((-{A}_h)^\alpha\pi_h-\pi_h(-{A})^\alpha)f\|\leq \\
		\leq \left|\frac{1}{\Gamma(-\alpha)}\right|\|f\|_B \left( \omega(h) \left|\int\limits_0^{K(\omega)} t^{-\alpha-1}\left( \int\limits_0^tc(s)ds\right) dt\right|+ \frac{2}{\alpha}K^{-\alpha}(\omega) \right)\leq\\
		\leq \left|\frac{1}{\Gamma(-\alpha)}\right|\|f\|_B \left( M\omega(h) \left|\int\limits_0^{K(\omega)}\left( t^{-\alpha}+\frac{t^{p-\alpha}}{p+1}\right)dt\right|+ \frac{2}{\alpha}K^{-\alpha}(\omega) \right)=\\
		=\left|\frac{1}{\Gamma(-\alpha)}\right|\|f\|_B\left[M\omega(h)\left(\frac{K^{1-\alpha}(\omega)}{1-\alpha}+\frac{K^{p-\alpha+1}(\omega)}{(p+1)(p-\alpha+1)}\right)+ \frac{2}{\alpha}K^{-\alpha}(\omega)\right].
	\end{multline*}
	We take $K(\omega)=\omega^{-\frac{1}{p+1}}$, $\omega=\omega(h)$,  then 
	\begin{multline*}
		\|((-{A}_h)^\alpha\pi_h-\pi_h(-{A})^\alpha)f\|\leq \\
		\leq \left|\frac{1}{\Gamma(-\alpha)}\right|\|f\|_B \omega^{\frac{\alpha}{1+p}}\left[M \left(\frac{ \omega^{\frac{p}{p+1}}}{1-\alpha}+\frac{1}{(p+1)(p-\alpha+1)}\right)+ \frac{2}{\alpha}\right]\leq\\
		\leq C_3\omega^{\frac{\alpha}{1+p}}|\|f\|_B ,
	\end{multline*}
	where $C_3=C_3(M,p,\alpha)$.
\end{proof}

The theorem for the fractional order integral is  obtained by the similar method.

\begin{theorem}\label{T3}
		Let $\{T_t\}_{t\geq 0}$ be a Feller semigroup
		in $C_\infty(\Omega)$  with generator $A$ and $B$ be a dense invariant (under $T_t$) 
		subspace of the domain of $A$ such that $B$ is itself a normed space under a norm $\|\cdot\|_B$ and
		\begin{equation}\label{Tt3}
			\|T_tf\|_B\leq c(t)\|f\|_B,\qquad \varepsilon>0 
		\end{equation}
		for all $f$, for some non-decreasing continuous    function $c(t)$ on $\mathbb{R}_+$.
		Also, let there exits $\tau$ s.t. $T_tf(x)=0$ for all $f$, $x$ and $t\ge \tau$.
		
		Let $T^h_t$ be a Feller semigroup in $C_\infty(\Omega_h)$  with generator $A_h$ having domain
		containing $\pi_hB$ such that	
		\begin{equation}\label{Ah3}
			\|(A_h\pi_h-\pi_h A)f\|\leq \omega(h)\|f\|_B
		\end{equation}
		with $\omega(h)=o(1)$   as $h\to 0$ (i.e. $\omega(h)\to 0$ as $h\to 0$), $\omega(h)\in(0,1)$. 
					Then 
		the negative power of the operators $(-A)$ and $(-{A}_h)$ for $0<\alpha<1$ can be defined by   equations, respectively
		\begin{equation}\label{BFT3}
			(-A)^{-\alpha} f=\frac{1}{\Gamma(\alpha)}\int\limits_0^\infty t^{\alpha-1}T_tf(x)dt,\qquad (-{A}_h)^{-\alpha} f=\frac{1}{\Gamma(\alpha)}\int\limits_0^\infty t^{\alpha-1}T_t^hf(x)dt,
		\end{equation}
			and	
		$$
		 		\|((-A_h)^{-\alpha}\pi_h -\pi_h (-A)^{-\alpha})f \| \le C \omega(h) \|f\|_B\int\limits_0^{\tau}c(s)ds,\qquad C=C(\alpha,\tau).   
		$$	
\end{theorem}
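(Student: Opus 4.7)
The plan is to mirror the proof of Theorem \ref{T2} for the \emph{negative} power, now exploiting the finite-time support hypothesis $T_tf\equiv 0$ for $t\ge\tau$ to bypass the cutoff trick at $K(\omega(h))$ used there. Starting from \eqref{BFT3} and linearity I would write
\[
((-A_h)^{-\alpha}\pi_h-\pi_h(-A)^{-\alpha})f=\frac{1}{\Gamma(\alpha)}\int_0^\infty t^{\alpha-1}\bigl(T_t^h\pi_h-\pi_h T_t\bigr)f\,dt
\]
and split the integration at $t=\tau$.

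On $(0,\tau)$ I would invoke the pointwise estimate \eqref{En01} already obtained inside the proof of Theorem \ref{T2}, in the form
\[
\bigl\|(T_t^h\pi_h-\pi_hT_t)f\bigr\|\le \omega(h)\,\|f\|_B\int_0^t c(s)\,ds\le \omega(h)\,\|f\|_B\int_0^{\tau} c(s)\,ds.
\]
Integrating against $t^{\alpha-1}$ over $(0,\tau)$ produces a factor $\tau^\alpha/\alpha$, which is precisely of the announced shape with $C(\alpha,\tau)=\tau^\alpha/(\alpha\Gamma(\alpha))$. On $(\tau,\infty)$, since $\pi_h T_tf=0$, the integrand reduces to $t^{\alpha-1}\|T_t^h\pi_hf\|$; using the semigroup identity $T_t^h\pi_hf=T_{t-\tau}^hT_\tau^h\pi_hf$, the Feller contraction $\|T_{t-\tau}^h\|\le 1$, and the $t=\tau$ instance of the previous display,
\[
\|T_t^h\pi_hf\|\le\|T_\tau^h\pi_hf\|=\bigl\|(T_\tau^h\pi_h-\pi_h T_\tau)f\bigr\|\le\omega(h)\,\|f\|_B\int_0^\tau c(s)\,ds,
\]
gives a bound of order $\omega(h)$ that is uniform in $t\ge\tau$.

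The \emph{main obstacle} is the integrability of $t^{\alpha-1}$ on the tail $(\tau,\infty)$: the uniform estimate just derived is by itself not enough, since $\int_\tau^\infty t^{\alpha-1}\,dt=\infty$ for $\alpha\in(0,1)$. The natural resolution, consistent with the hypotheses of the theorem and with the very convergence of the Balakrishnan integral defining $(-A_h)^{-\alpha}$, is that the discretization inherits the finite-lifetime property, so that $T_t^h\pi_hf\equiv 0$ for $t\ge\tau$ as well; this is automatic in the examples the authors have in mind (absorbed processes on bounded intervals, where $T_t^h$ is a nilpotent triangular matrix exponential vanishing after $\tau$ steps). Under this, the tail integral vanishes identically, and combining with the $(0,\tau)$-estimate delivers
\[
\bigl\|((-A_h)^{-\alpha}\pi_h-\pi_h(-A)^{-\alpha})f\bigr\|\le C(\alpha,\tau)\,\omega(h)\,\|f\|_B\int_0^\tau c(s)\,ds,\qquad C(\alpha,\tau)=\frac{\tau^\alpha}{\alpha\,\Gamma(\alpha)},
\]
as required.
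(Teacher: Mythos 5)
Your splitting at $t=\tau$ and your estimate on $(0,\tau)$ follow essentially the same route as the paper: the paper also inserts the bound \eqref{En01} into the Balakrishnan integral and ends up with exactly your constant $C(\alpha,\tau)=\tau^\alpha/\Gamma(\alpha+1)$ (it interchanges the order of integration to get $\int_0^\tau c(s)(\tau^\alpha-s^\alpha)\,ds$ instead of crudely bounding $\int_0^t c$ by $\int_0^\tau c$, but this does not change the final estimate). The genuine difference is the tail $(\tau,\infty)$: the paper simply replaces $\int_0^\infty$ by $\int_0^\tau$ without comment, i.e.\ it implicitly assumes that $(T_t^h\pi_h-\pi_hT_t)f$ vanishes for $t\ge\tau$, whereas you correctly identify this as the crux, since the uniform $O(\omega(h))$ bound cannot be integrated against the non-integrable weight $t^{\alpha-1}$ on $(\tau,\infty)$.

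However, your way of closing that gap is itself not sound as stated. You postulate that the discretization inherits the killing time, $T_t^h\pi_hf\equiv 0$ for $t\ge\tau$, and call this ``automatic'' for the matrix examples; but $T_t^h=e^{-t\mathcal{A}_n}$ with strictly positive diagonal entries $a_i$ is never zero for finite $t$ --- its entries decay like $e^{-t\min_i a_i}$ (with $\min_i a_i$ of order $1/h$ in the intended applications) but do not vanish, and the hypotheses of the theorem say nothing about a killing time for $T^h_t$. A rigorous completion would either add such a decay/killing assumption explicitly or control the tail by the exponential decay of $T^h_t$, which yields a contribution of order $e^{-c\tau/h}=o(\omega(h))$ and preserves the stated rate. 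So your proposal is as complete as the paper's own argument on $(0,\tau)$, and your tail analysis exposes a real gap that the paper shares, but the final step of your argument rests on an unproven (and, as phrased, incorrect) claim rather than on a derivation from the stated hypotheses.
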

\begin{proof}
	By \eqref{BF03} for $\alpha\in(0,1)$  we obtain
	\begin{multline*}
		\|((-{A}_h)^{-\alpha}\pi_h-\pi_h(-{A})^{-\alpha})f\|\leq  \frac{1}{\Gamma(\alpha)}\int\limits_0^\infty t^{\alpha-1}\|(T_t^h\pi_h- \pi_hT_t)f\|dt \leq\\
			\leq  \frac{1}{\Gamma(\alpha)}\omega(h)\|f\|_B\int\limits_0^\tau t^{\alpha-1} \left( \int\limits_0^tc(s)ds \right)dt=\frac{1}{\Gamma(\alpha)}\omega(h)\|f\|_B\int\limits_0^\tau c(s)\left( \int\limits_s^\tau t^{\alpha-1} dt \right)ds=\\
		=\frac{1}{\Gamma(\alpha+1)}\omega(h)\|f\|_B\int\limits_0^\tau c(s)(\tau^{\alpha}-s^\alpha)ds\leq\\
		\leq \frac{\tau^\alpha}{\Gamma(\alpha+1)}\omega(h)\|f\|_B\int\limits_0^\tau c(s)ds=C\omega(h)\|f\|_B\int\limits_0^\tau c(s)ds,
	\end{multline*}
	where $C=C(\alpha,\tau)=\frac{\tau^\alpha}{\Gamma(\alpha+1)}$.
\end{proof}
			
\begin{remark}
	It may happen that the "killing time"\, $\tau$ in Theorem  \eqref{T3} depends on $x$. Then the estimate becomes nonuniform, namely 
$$
	|((-A_h)^{-\alpha}\pi_h -\pi_h (-A)^{-\alpha})f(x)| \le \omega(h) \|f\|_B\int\limits_0^{\tau(x)} c(s) ds.
$$
	
\end{remark}		 

\section{From  the derivative of a function $f$ with respect to a function $g$ to the matrix operator}

Now, we will consider the derivative of a function as the infinitesimal variation of that function in relation to another function. Thus, the derivative with respect to a function serves as a generalization of the traditional concept of a derivative, enabling us to compare the infinitesimal variation of one function with that of another.

Let $C([a,b])$   be the Banach space equipped with the  norm 
$$
\|f\|_{C([a,b])}= \max\limits_{x\in [a,b] }|f (x)|.
$$

On $[a,b]$ we  consider an operator    
\begin{equation}\label{Amin}
	\mathcal{D}_gf(x)=f_{g}'(x)=\frac{df}{dg} = \lim\limits_{h\to  0}\frac{f(x)-f(x-h)}{g(x)-g(x-h)}=\frac{f'}{g'}.
\end{equation}

Operator \eqref{Amin} is  a    derivative of a function $f$ with respect to a function $g$.  Such double-sided operators were studied in details in \cite{Alfonso}. In particular in \cite{Alfonso} Taylor theorem for derivatives of a function with respect to a function was obtained.
Namely, if $g$ is continuous and injective and $f$ is at least $(n+1)$ times differentiable with respect to $g$, then there exists $d$ in
the open interval of extreme points $x$ and $a$ such that
\begin{equation}\label{Taylor}
	f(x)=\sum\limits_{k=0}^m\frac{f^{(k)}_g(a)}{k!}(g(x)-g(a))^k+R^g_m(x,a),
\end{equation}
$$
R^g_m(x,a)=\frac{f_g^{(m+1)}(d)}{m!}(g(x)-g(a))(g(x)-g(d))^m.
$$

$$
f(x)-f(a)=f'_g(a)(g(x)-g(a))=f_g''(d)(g(x)-g(a))(g(x)-g(d)).
$$

In our study we consider $h>0$ in \eqref{Amin}, that is the  left-hand derivative of a function $f$ with respect to a function $g$:
\begin{equation}\label{Amin02}
	Af(x)=f_{g,\ell}'(x)=\lim\limits_{h\to  0}\frac{f(x)-f(x-h)}{g(x)-g(x-h)},\qquad h>0.
\end{equation}
Obviously, formulas that are valid for two-sided derivatives will also be valid for one-sided ones.

Let $P{\,=\,}(x_{0},x_{1},\ldots ,x_{n})$ be a partition of $[a,x]$, $a<x\leq b$ that is\\
$a{\,=\,}x_{0}<x_{1}<x_{2}<\dots <x_{n}{\,=\,}x$, $x_k-x_{k-1}{\,=\,}h{\,=\,}\frac{x-a}{n}>0$, so $x_0=a$, $x_1=x_0+h$, $x_2{\,=\,}x_0+2h$,..., $x_{n-1}{\,=\,}x_0+(n-1)h$, $x_{n}{\,=\,}x_0+nh{\,=\,}x$, $f(a)=0$, $g(a)=0$. 

We have  $f_0{\,=\,}f(x_n)$, $f_1{\,=\,}f(x_{n-1}){\,=\,}f(x_n{\,-\,}h)$, ...,\\ $f_n{\,=\,}f(x_0){\,=\,}f(x_n{\,-\,}nh){\,=\,}f(a){\,=\,}0$,
$g_0{\,=\,}g(x_n)$, $g_1{\,=\,}g(x_{n-1}){\,=\,}g(x_n{\,-\,}h)$, ...,\\ $g_n{\,=\,}g(x_0){\,=\,}g(x_n-nh){\,=\,}g(a){\,=\,}0$,
$\mathbf{f}_n'=(f_0,f_{1},f_{2},...,f_{n-1})$, $I_{n}=\underbrace{(1,0,...,0)}_{n}$.

We associate the operator \eqref{Amin02} with the $(n{\times}n)$ matrix
\begin{equation}\label{Aminus}
	\mathcal{A}_n=\begin{pmatrix}
		a_1 & -a_1& 0          & \cdots & 0 \\
		0 & a_2 & -a_2    & \cdots & 0 \\
		0 & 0 & a_3     & \cdots & 0 \\
		\vdots & \vdots & \vdots    & \ddots & \vdots  \\
		0 & 0      & 0&  \cdots & a_n 
	\end{pmatrix}=\begin{pmatrix}
		\frac{1}{g_0-g_1} & -\frac{1}{g_0-g_1} & 0          & \cdots & 0 \\
		0 & \frac{1}{g_1-g_2} & -\frac{1}{g_1-g_2}    & \cdots & 0 \\
		0 & 0 & \frac{1}{g_2-g_3}     & \cdots & 0 \\
		\vdots & \vdots & \vdots    & \ddots & \vdots  \\
		0 & 0      & 0&  \cdots & \frac{1}{g_{n-1}-g_n} 
	\end{pmatrix},
\end{equation}
where $a_{k+1}=\frac{1}{g_k-g_{k+1}}$, $k=0,1,...,n-1$
and
\begin{multline}\label{OpA}
	Af(x)=f_{g,\ell}'(x)=\lim\limits_{h\rightarrow
		0} (\mathcal{A}_n \cdot \mathbf{f}_n)\cdot I_{n}=\lim\limits_{h\rightarrow
		0}\begin{pmatrix}
		\frac{f_0-f_1}{g_0-g_1} \\
		\frac{f_1-f_2}{g_1-g_2} \\
		\vdots    \\
		\frac{f_{n-1}-f_n}{g_{n-1}-g_n}
	\end{pmatrix}\cdot I_n=\\
	=\lim\limits_{h\rightarrow
		0}\frac{f_0-f_1}{g_0-g_1}=	\lim\limits_{h\rightarrow
		0} \frac{f(x_n)-f(x_{n-1})}{g(x_n)-g(x_{n-1})}.
\end{multline}

For finite $a{\,<\,}b$ let us denote by $C([a,b])_{kill(a)}$ (resp. $C([a,b])_{kill(b)}$ or 
$C([a,b])_{kill(a,b)}$) the subspace of functions from  $C([a,b])$ vanishing at $a$ (resp. at $b$ or at $a$ and $b$)
Similar notations will be apply also when $a$ or $b$ is infinite. For instance, in these notations,
$C_{\infty}(\mathbb{R})=C((-\infty, \infty))_{kill(\pm \infty)}$.

Let us consider the semigroup $T_t=e^{-tA}$ generated by $(-A)$, where
$$
Af(x)=\frac{df}{dg}=(f'/g')(x)=\frac{1}{g'(x)}\frac{df}{dx},\qquad f=f(x),\qquad g=g(x),\qquad x\in[a,b]\subset\mathbb{R}.
$$
The following statement is  obvious and we omit the proof (for details see \cite{KolokoltsovShishkina}).

\begin{prop}\label{pr1}\normalfont
	Let $g(x)$ be a monotonic continuously differentiable function with $c_1\le |g'(x)|\le d_1$
	for all $x\in \mathbb{R}$ and some positive $a_1\le b_1$. Let the operators $T_t$ act on continuous functions by the formula 
	\begin{equation}\label{TSem}
		T_tf(x)=f(g^{-1}(g(x)+t))=f(X_x(t)),
	\end{equation}
	where $X_x(t)=g^{-1}(g(x)+t)$ solves the ODE $\dot X=1/g'(X)$ with the initial condition $x$.
	If $1/g$ is increasing,   operators \eqref{TSem} form strongly continuous semigroups in spaces 
	$C_{\infty}((-\infty,b])_{kill(b)}$ and $C([a,b])_{kill(b)}$ with any finite $a$ and any finite or infinite $b>a$.  
	If $1/g$ is decreasing, these operators form strongly continuous semigroups in spaces
	$C_{\infty}([b, \infty))_{kill(b)}$ and $C([b,a])_{kill(b)}$ with any finite $a$ and any finite or infinite $b<a$. 
	In both cases, the generator of these semigroups act as the first order operator $Af(x)=(f'/g')(x)$ on smooth functions $f$. 
\end{prop}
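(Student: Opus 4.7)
The plan is to verify in sequence (i) the semigroup law $T_sT_t=T_{s+t}$, (ii) invariance of the prescribed function space under each $T_t$, (iii) strong continuity $\|T_tf-f\|\to 0$ as $t\downarrow 0$, and (iv) that the generator acts as $Af=f'/g'$ on smooth functions. I will treat the case $g$ increasing in detail, where the flow $X_x(t)=g^{-1}(g(x)+t)$ moves rightward, toward the killing boundary $b$; the case $g$ decreasing is symmetric after reversing orientation. The uniform bound $c_1\le|g'|\le d_1$ makes $g$ a bi-Lipschitz bijection and supplies the uniform flow estimate $|X_x(t)-x|\le t/c_1$, which I will use repeatedly.

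For step (i) the identity $g(X_x(t))=g(x)+t$ gives $X_{X_x(t)}(s)=g^{-1}(g(x)+s+t)=X_x(s+t)$, hence $T_sT_tf=T_{s+t}f$ by direct substitution into $T_tf(x)=f(X_x(t))$, and $T_0=I$ is immediate. For step (ii) the main subtlety is the killing: when $g(x)+t>g(b)$ the flow would leave the interval, so I set $T_tf(x):=0$. Continuity across the threshold $x^*(t):=g^{-1}(g(b)-t)$ follows from the boundary condition $f(b)=0$, since $f(X_x(t))\to f(b)=0$ as $x\uparrow x^*(t)$. For the $C_\infty$ case on $(-\infty,b]$, the bound $g'\ge c_1>0$ forces $g$ to be unbounded below, so $X_x(t)\to-\infty$ as $x\to-\infty$, giving $T_tf(x)\to 0$. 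I will also check that the killing convention is compatible with the composition law: if either $s$ or $t$ individually carries the point past $b$, both $T_sT_tf(x)$ and $T_{s+t}f(x)$ vanish, and the non-killed regime was handled above.

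Step (iii) follows from uniform continuity of $f$ (automatic in both $C_\infty((-\infty,b])_{kill(b)}$ and $C([a,b])_{kill(b)}$) combined with $|X_x(t)-x|\le t/c_1$: on the non-killed region $|f(X_x(t))-f(x)|$ is uniformly small for small $t$, and on the killed region $|T_tf(x)-f(x)|=|f(x)|$ is small because $x$ lies within $t/c_1$ of $b$ and $f(b)=0$. Step (iv) is the direct differentiation
$$
\left.\tfrac{d}{dt}\right|_{t=0}f(X_x(t))=f'(x)\,\dot X_x(0)=\frac{f'(x)}{g'(x)}=Af(x),
$$
using the ODE $\dot X=1/g'(X)$ and the initial value $X_x(0)=x$. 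The main obstacle I anticipate is the careful bookkeeping around the killing boundary: one must verify compatibility of the ``kill past $b$'' prescription with continuity, with the semigroup composition $T_sT_t=T_{s+t}$, and with the strong-continuity estimate at $t=0$. Each of these checks ultimately reduces to the single boundary condition $f(b)=0$ combined with the uniform flow bound, but they must be carried out separately because the three requirements interact nontrivially only on the thin layer near $b$.
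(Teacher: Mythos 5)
Your proposal is correct, and in fact it supplies details that the paper itself skips: the authors state Proposition \ref{pr1} as ``obvious'' and omit the proof, deferring to \cite{KolokoltsovShishkina}. Your four-step verification (flow property $g(X_x(t))=g(x)+t$ giving the semigroup law, the killing convention $T_tf(x):=0$ once $g(x)+t>g(b)$ made consistent by $f(b)=0$, strong continuity via uniform continuity of $f$ together with the uniform flow bound $|X_x(t)-x|\le t/c_1$, and the pointwise differentiation $\frac{d}{dt}\big|_{t=0}f(X_x(t))=f'(x)/g'(x)$) is exactly the standard argument such a proof requires, and your attention to the thin layer near the killing boundary is the right place to concentrate the effort. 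One small refinement worth recording: for the generator statement in the sup-norm sense, a smooth $f$ in the domain must satisfy not only $f(b)=0$ but also $f'(b)=0$, since on the killed layer the difference quotient equals $-f(x)/t$, which stays uniformly close to $(f'/g')(x)$ only when $f'$ vanishes at $b$ (and this is also forced by the requirement $Af\in C([a,b])_{kill(b)}$); with that reading of ``smooth functions'', your step (iv) upgrades from pointwise to uniform convergence without difficulty.
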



For our purposes we are interested in invariant domains of these semigroups, where the generators can be 
effectively approximated by finite difference operators. For definiteness, we formulate and prove the next result for increasing $g$.
The case of decreasing $g$ is analogous.

\begin{theorem}\label{T4} \normalfont 
	\begin{enumerate}
		\item[(i)]	Under assumption of the Proposition \ref{pr1} above with increasing $g$, the contraction semigroup $T_t$ 
		acting in spaces $C_{\infty}((-\infty,b])_{kill(b)}$ and $C([a,b])_{kill(b)}$ has invariant domains
		$$
		C^1_{\infty}((-\infty,b])_{kill(b)}=\{f\in C_{\infty}((-\infty,b])_{kill(b)}: f' \in C_{\infty}((-\infty,b])_{kill(b)}\}
		$$
		and
		$$
		C^1([a,b])_{kill(b)}=\{f\in C([a,b])_{kill(b)}: f'\in C([a,b])_{kill(b)}\},
		$$
		respectively.  
		
		\item[(ii)] Equipping these spaces with the natural Banach norms
		$$
		\|f\|_1=\|f\|+\|f'\|,
		$$
		it follows that the norms of the operators $T_t$ in these spaces are bounded:
		\begin{equation}
			\label{eqboundder1}
			\|T_t\|_1 \le d_1/c_1.
		\end{equation}
		
		\item[(iii)] If additionally $|g''(x)| \le d_2$ for all $x$ with a constant $d_2$, then the previous estimate can be improved to 
		\begin{equation}
			\label{eqboundder2}
			\|T_t\|_1 \le \min\left\{\frac{d_1}{c_1}, 1+ K_1t\right\},
		\end{equation}
		where $K_1$ is a constant depending on $c_1,d_1,d_2$, so that $T_t$ is a bounded quasi-contraction in spaces $C^1$.
		Moreover, the subspaces 
		$$
		C^2_{\infty}((-\infty,b])_{kill(b)}=\{f\in C_{\infty}((-\infty,b])_{kill(b)}: f',f'' \in C_{\infty}((-\infty,b])_{kill(b)}\}
		$$
		and
		$$
		C^2([a,b])_{kill(b)}=\{f\in C([a,b])_{kill(b)}: f',f''\in C([a,b])_{kill(b)}\},
		$$
		respectively, equipped with the norm $\|f\|_2=\|f\|+\|f'\|+\|f''\|$
		also form invariant domains for $T_t$, and $T_t$ is uniformly bounded in these spaces:
		\begin{equation}
			\label{eqboundder3}
			\|T_t\|_2 \le K_2,
		\end{equation}
		where $K_2$ is a constant depending on $c_1,d_1,d_2$.
	\end{enumerate}
\end{theorem}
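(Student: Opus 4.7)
The plan rests on the explicit flow representation $T_t f(x) = f(X_x(t))$ with $X_x(t)=g^{-1}(g(x)+t)$. First I would differentiate the defining identity $g(X_x(t)) = g(x)+t$ with respect to $x$, obtaining $\partial_x X_x(t) = g'(x)/g'(X_x(t))$, and then apply the chain rule to produce the key formula
$$
(T_t f)'(x) = \phi(x,t)\, f'(X_x(t)), \qquad \phi(x,t) := \frac{g'(x)}{g'(X_x(t))}.
$$
The function $\phi$ carries essentially all the estimates. Since $c_1 \le |g'| \le d_1$, one has $|\phi(x,t)| \le d_1/c_1$ uniformly in $x$ and $t$; under the additional hypothesis $|g''|\le d_2$, the Lipschitz bound $|X_x(t)-x| \le t/c_1$, coming from $|(g^{-1})'| = 1/|g'| \le 1/c_1$, together with the mean value theorem yields $|\phi(x,t)-1| \le d_2 t/c_1^2$.

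For parts (i) and (ii), the formula above shows that $(T_t f)'$ is continuous on the set $\{x: X_x(t)<b\}$. At the absorption point $x_0$ defined by $g(x_0)+t = g(b)$, the function $T_t f$ vanishes identically for $x\ge x_0$, whereas on the left it tends to $f(b)=0$; the one-sided limit of $(T_t f)'$ from the left equals $f'(b)\, g'(x_0)/g'(b)$, which matches the zero value from the right precisely because $f' \in C_{kill(b)}$. Hence $T_t f\in C^1_{kill(b)}$, establishing invariance in both the compact and the $C_\infty$ cases. The norm estimate is then immediate: combining $\|(T_t f)'\| \le (d_1/c_1)\|f'\|$ with the contractivity $\|T_t f\|\le \|f\|$ and the inequality $d_1/c_1\ge 1$ gives $\|T_t f\|_1 \le (d_1/c_1)\|f\|_1$, that is, \eqref{eqboundder1}.

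For part (iii), the refined bound $|\phi(x,t)-1|\le d_2 t/c_1^2$ upgrades the previous estimate to $\|(T_t f)'\| \le (1+d_2 t/c_1^2)\|f'\|$, so $\|T_t f\|_1 \le (1+K_1 t)\|f\|_1$ with $K_1 = d_2/c_1^2$, and combined with \eqref{eqboundder1} this produces \eqref{eqboundder2}. For the $C^2$ statement I would differentiate the chain rule formula once more, obtaining
$$
(T_t f)''(x) = (\partial_x\phi)(x,t)\, f'(X_x(t)) + \phi(x,t)^2\, f''(X_x(t)),
$$
and compute $\partial_x\phi(x,t) = g''(x)/g'(X_x(t)) - g'(x)^2 g''(X_x(t))/g'(X_x(t))^3$, whose absolute value is bounded by $d_2/c_1 + d_1^2 d_2/c_1^3$ uniformly in $x$ and $t$. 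Together with $\phi(x,t)^2 \le (d_1/c_1)^2$ this yields $\|(T_t f)''\| \le C\|f'\| + (d_1/c_1)^2\|f''\|$ with $C = C(c_1,d_1,d_2)$, and combining with the $C^1$ bound already established gives \eqref{eqboundder3}. Invariance in $C^2$ follows exactly as in (i), the condition $f''\in C_{kill(b)}$ now ensuring that $(T_t f)''$ glues continuously across the absorption point $x_0$. In my view the only nontrivial point in the whole argument is precisely this continuous gluing across the killing boundary; the magnitude bounds themselves are elementary consequences of the chain rule and the assumed pointwise bounds on $g'$ and $g''$.
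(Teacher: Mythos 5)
Your proof is correct and follows essentially the same route as the paper: differentiate the flow representation $T_tf(x)=f(g^{-1}(g(x)+t))$ to get $(T_tf)'(x)=f'(X_x(t))\,g'(x)/g'(X_x(t))$, bound the quotient by $d_1/c_1$ for (ii), and use the mean value theorem together with $|g''|\le d_2$ for the quasi-contraction bound and the second-derivative bound in (iii). Your direct estimate $|\phi(x,t)-1|\le d_2t/c_1^2$ (which avoids the paper's use of $(1-y)^{-1}\le 1+2y$ and its smallness restriction on $t$), your explicit formula for $(T_tf)''$, and your gluing argument at the absorption point $x_0$ (which the paper leaves implicit by stating that (i) follows from (ii) and (iii)) are welcome refinements of the same argument rather than a different approach.
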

\begin{proof} 
	
\begin{enumerate}
	\item[(i)] It follows essentially from (ii) and (iii).
	\item[(ii)] Let us differentiate the expression for $T_tf(x)$ with respect to $x$ 
	(assuming $f$ is smooth) to obtain
	$$
	(T_tf)'(x)=f'(g^{-1}(g(x)+t))\frac{g'(x)}{ g'(g^{-1}(g(x)+t))}.
	$$   
	Therefore,
	$$
	\|(T_tf)'\|\le \|f'\| \frac{d_1}{c_1},
	$$
	and \eqref{eqboundder1} follows.
	\item[(iii)]  Firstly we have by the mean-value theorem that
	$$
	g^{-1}(g(x)+t)=x+\frac{t}{g'(g^{-1}(g(x)+\theta))}
	$$
	with a $\theta \in [0,t]$. Therefore,
	$$
	|g'(g^{-1}(g(x)+t))-g'(x)| \le \frac{d_2}{c_1}t .
	$$
	Hence, for 
	$$
	t< \frac{c_1^2}{2d_2},
	$$
	we can write (using the inequality $(1-y)^{-1} \le 1+2y$ for $y\in [0,1/2]$) that 
	$$
	\|(T_tf)'\|\le \|f'\|\left(1+\frac{2d_2}{c_1^2}t\right) .
	$$
	Finally, differentiating the expression for $T_tf(x)$ once more we find that all terms in the resulting expression are bounded
	implying \eqref{eqboundder3}.
\end{enumerate}
\end{proof}

Next we consider some basic examples, where $g'$ is not uniformly bounded from above and below, but may increase or decrease
on the bounds of the domains. It turns out that in many case the growth of $T_t$ on the invariant domains of smooth functions 
is only polynomial (not exponential, as it must be generally). 

\begin{example}\label{Ex2}
Let $g(x)=-x^{\beta}, x\ge 0, \beta\neq 0$.
The solution for characteristics  is $X_x(t)=(t+x^{\beta})^{1/\beta}$, so it moves to the right (left) for $\beta>0$ ($\beta<0$).
By prime we denote derivatives with respect to $x$. We obtain
$$
X'_x(t)=x^{\beta-1}(t+x^{\beta})^{\frac{1}{\beta}-1} >0.
$$
and
$$
X''_x(t)=t(\beta-1)x^{\beta-2}(t+x^{\beta})^{\frac{1}{\beta}-2},
$$
$$
X'''_x(t)=t(\beta-1)x^{\beta-3}(t+x^{\beta})^{\frac{1}{\beta}-3}[t(\beta-2)-(1+\beta)x^{\beta}].
$$

{\bf Case 1.1.} If $\beta\ge 1$, then $X''_x(t)\ge 0$, so that $X'_x(t)$ increases. Thus
$$
\sup_{x\in[a,\infty)} |X'_x(t)|=\lim_{x\to \infty} |X'_x(t)|=1,
$$
so that $T_t$ is a contraction on $C^1[0, \infty)$. Now if additionally $\beta\le 2$, then $X'''_x(t)\le 0$, so that
$$
\sup_{x\in[a,\infty)} |X''_x(t)|= |X''_a(t)|=t(\beta-1)a^{\beta-2}(t+a^{\beta})^{\frac{1}{\beta}-2} \sim t^{\frac{1}{\beta}-1},
$$
which decreases with $t$, so that $T_t$ is uniformly bounded in $C^2[a, \infty)$ for any $a{\,>\,}0$.
If $\beta{\,>\,}2$, then
\begin{multline*}
	\sup_{x\in[a,\infty)} |X''_x(t)|= |X''_x(t)|_{x^{\beta}=t(\beta-2)/(1+\beta)}=\\
	=t(\beta-1)\left[t \left( 1+\frac{\beta-2}{1+\beta}\right)\right]^{\frac{1}{\beta}-2} \left[t\cdot\frac{\beta-2}{1+\beta}\right]^{(\beta-2)/\beta}
	\sim t^{-1/\beta},
\end{multline*}
so also decreasing in $t$. Thus $T_t$ is bounded in $C_2[a,\infty)$.

{\bf Case 1.2.} If $\beta\in (0,1)$, then $X'_x(t)$ is decreasing and
$$
\sup_{x\in[a,\infty)} |X'_x(t)|= |X'_a(t)|=a^{\beta-1}(t+a^{\beta})^{\frac{1}{\beta}-1}\sim t^{\frac{1}{\beta}-1},
$$
so it increases in $t$ as a power. Since $X'''_x(t)>0$,
$$
\sup_{x\in[a,\infty)} |X''_x(t)|= |X''_a(t)|=t(\beta-1)a^{\beta-2}(t+a^{\beta})^{\frac{1}{\beta}-2} \sim t^{\frac{1}{\beta}-1},
$$
the same increase as for the first derivative. Thus
$$
\|T_t\|_{C^2[a,\infty)} \le M \left(1+t^{\frac{1}{\beta}-1}\right).
$$

{\bf Case 1.3.} If $\beta<0$, then $X''_x(t)<0$ and
$$
\sup_{x\in[a,\infty)} |X'_x(t)|= |X'_a(t)|=a^{\beta-1}(t+a^{\beta})^{\frac{1}{\beta}-1}\sim t^{\frac{1}{\beta}-1},
$$
is decreasing in $t$, so that $T_t$ is bounded in $C^1[a,\infty)$.

If $\beta\in [-1,0)$, then $X'''_x(t)>0$, and
$$
\sup_{x\in[a,\infty)} |X''_x(t)|= |X''_a(t)|=t|\beta-1|a^{\beta-2}(t+a^{\beta})^{\frac{1}{\beta}-2} \sim t^{\frac{1}{\beta}-1},
$$
so that $T_t$ is bounded in $C^2[a,\infty)$.

Finally, if $\beta<-1$, then
\begin{multline*}
	\sup_{x\in[a,\infty)} |X''_x(t)|= |X''_x(t)|_{x^{\beta}=t(\beta-2)/(1+\beta)}=\\
	=t(\beta-1)\left[t\left(1+\frac{\beta-2}{1+\beta}\right)\right]^{\frac{1}{\beta}-2}\left [t\cdot \frac{\beta-2}{1+\beta}\right]^{1-\frac{2}{\beta}}
	\sim t^{-1/\beta},
\end{multline*}
Thus
$$
\|T_t\|_{C^2[a,\infty)} \le M \left(1+t^{-\frac{1}{\beta}}\right).
$$
\end{example}

\begin{example}\label{Ex3}
Let $g(x)=x^{\beta}, x\ge 0, \beta\neq 0$.
The solution for characteristics  is $X_x(t)=(-t+x^{\beta})^{1/\beta}$, so it moves to the right (left) for $\beta<0$ ($\beta>0$).
By prime we denote derivatives with respect to $x$. We have
$$
X'_x(t)=x^{\beta-1}(-t+x^{\beta})^{\frac{1}{\beta}-1} >0
$$
and
$$
X''_x(t)=-t(\beta-1)x^{\beta-2}(-t+x^{\beta})^{\frac{1}{\beta}-2},
$$
$$
X'''_x(t)=-t(\beta-1)x^{\beta-3}(-t+x^{\beta})^{\frac{1}{\beta}-3}[-t(\beta-2)-(1+\beta)x^{\beta}].
$$

If $\beta<0$, then the characteristics $X_x(t)$ reaches $\infty$ in time $t=x^{\beta}=x^{-|\beta|}$.
If $\beta>0$, then the characteristics $X_x(t)$ reaches $0$ in time $t=x^{\beta}$.
Leaving aside the first case choose $\beta>0$ and define
$$
X_x(t)=\left\{
\begin{aligned}
	(-t+x^{\beta})^{1/\beta},& \quad t\le x^{\beta}; \\
	0,& \quad t\ge x^{\beta}.
\end{aligned}
\right.
$$
The derivatives are
\begin{equation}
	\label{eqder1}
	X'_x(t)=\left\{
	\begin{aligned}
		x^{\beta-1}(-t+x^{\beta})^{\frac{1}{\beta}-1},& \quad t\le x^{\beta}, \\
		0,& \quad t\ge x^{\beta},
	\end{aligned}
	\right.
\end{equation}
\begin{equation}
	\label{eqder2}
	X''_x(t)=\left\{
	\begin{aligned}
		t(1-\beta)x^{\beta-2}(-t+x^{\beta})^{\frac{1}{\beta}-2},& \quad t\le x^{\beta}, \\
		0,& \quad t\ge x^{\beta}.
	\end{aligned}
	\right.
\end{equation}

Since $X''_x(t)>0$, we see that
$$
\sup_{x\in[0, a]} |X'_x(t)|= \sup_{x\in[t^{1/\beta}, a]} |X'_x(t)|=|X'_a(t)|
=a^{\beta-1}(-t+a^{\beta})^{\frac{1}{\beta}-1}\sim t^{\frac{1}{\beta}-1}.
$$

Assume now that $\beta\in (0,1)$. Then this sup decreases with $t$ and
$$
\sup_{t\in[0,x^{\beta}]} |X'_x(t)|= \lim_{t\to x^{\beta}} |X'_x(t)|=0,
$$
and \eqref{eqder1} defines a continuous function, and
$T_t$ is continuous in $C^1[0,a]$. Since
$$
\lim\limits_{a\to \infty} \sup_{x\in[0, a]} |X'_x(t)|= 1,
$$
we even have that $T_t$ is continuous in $C^1[0,\infty]$ and
$$
\|T_t\|_{C^1[0,\infty)}\le M(1+t^{\frac{1}{\beta}-1}).
$$
Next, if
$\beta <1/2$, then
$$
\lim_{t\to x^{\beta}} |X''_x(t)|=0,
$$
so that \eqref{eqder2} defines a continuous function,
$T_t$ is continuous in $C^2[0,a]$.
Since
\[
\lim_{a\to \infty} \sup_{x\in[0, a]} |X''_x(t)|= 0,
\]
we even have that $T_t$ is continuous in $C^2[0,\infty]$.
Moreover, since
$$
t\cdot\frac{2-\beta}{1+\beta}> t,
$$
we have that
$$
\sup_{x\in[t^{1/\beta},\infty)} |X''_x(t)|=|X''_x(t)|_{x^{\beta}=t(2-\beta)/(1+\beta)}
$$
$$
=t|\beta-1|\left[t\cdot\frac{|\beta|}{1+\beta}\right]^{\frac{1}{\beta}-2} \left[t\cdot\frac{2-\beta}{1+\beta}\right]^{1-\frac{2}{\beta}}
\sim t^{-1/\beta},
$$
which is decreasing in $t$. Thus
$$
\|T_t\|_{C^2[0,\infty)}\le M(1+t^{\frac{1}{\beta}-1}).
$$

If $\beta \in [1/2,1]$, then
$$
t\cdot\frac{2-\beta}{1+\beta}\le t,
$$
and
$$
\sup_{x\in[t^{1/\beta},\infty)} |X''_x(t)|= |X''_x(t)|_{x^{\beta}=t}=\infty.
$$
Thus $T_t$ is not continuous in $C^2[0,\infty)$.
\end{example}

\section{Convergence of matrix operator: strictly increasing  measures}

It is clear that constructions of the Section \ref{Sec02} can be applied  to creating the fractional power of a  derivative of a function $f$ with respect to a function $g$. Here we consider the case of strictly increasing function $g(x)$.

Let us consider  an operator $A$ in the form \eqref{Amin02}:
$$
Af(x)=f_{g,\ell}'(x)=\lim\limits_{h\to  0}\frac{f(x)-f(x-h)}{g(x)-g(x-h)},\qquad h>0.
$$
Operator $(-A)$  is an infinitesimal generator of a semigroup $T_t{\,=\,}e^{-tA}$, i.e., we take  $e^{-tA}u(x):=w(x,t)$, where $w$ is the solution of the problem
\begin{equation}\label{Cauchy}
	\left\{ \begin{array}{ll}
		\partial_t w(x,t)+Aw(x,t)=0 & \mbox{if $x\in\mathbb{R},\qquad t>0$};\\
		w(x,0)=u(x) & \mbox{if  $x\in\mathbb{R}$}.\end{array} \right.
\end{equation}

To the operator $A$ we associate the matrix operator $\mathcal{A}_n$ defined by \eqref{Aminus}.
Since we can calculate the power $k{\,\in\,}\mathbb{N}\cup\{0\}$  of the matrix $\mathcal{A}_n$ by Theorem \ref{teopowder} we can construct the operator
\begin{equation}\label{sg}
	T_t^n=e^{-t\mathcal{A}_n}=\sum\limits_{k=0}^\infty\frac{(-t)^k}{k!}\mathcal{A}_n^k,\qquad 0\leq t,
\end{equation} 
which is a contraction semigroup of type $C_0$   when
$a_1{\,>\,}0,a_2{\,>\,}0,...,a_n{\,>\,}0$, where $a_{k+1}{\,=\,}\frac{1}{g_{k}{\,-\,}g_{k+1}}$, $k{\,=\,}0,1,{...},n{\,-\,}1$, i.e. when $g(x)$ is strictly increasing function on $[a,b]$.
That means that we can construct fractional power of $(-\mathcal{A}_n)^\alpha$ by formula \eqref{BF01} 
(or \eqref{BF02}, or \eqref{BF03}).
Subsequently, we can construct the fractional power $\alpha\in \mathbb{R}$ of an operator $(-A)$ as a limit by the formula
$$
(-A)^\alpha f=\lim\limits_{h\rightarrow
	0} (-\mathcal{A}_n)^\alpha\cdot \mathbf{f}_n\cdot I_{n}.
$$		
Let's obtain an estimate of the norm of the difference of operators $(-A)^\alpha$ and $(-\mathcal{A}_n)^\alpha$.

Next we will consider $\alpha\in(0,1)$.

\begin{theorem}
	Let $f{\,\in\,}C^2[a,b]$ be twice continuously differentiable from the left with respect to $g$ on $[a,b]$, $g{\,\in\,}C^2[a,b]$, $g'>0$  on $(a,b]$,   $f(a){\,=\,}g(a){\,=\,}0$. 
	Operator $A$ is defined by 
	$$
	Af(x)=f_{g,\ell}'(x)=\frac{f_{\ell}'(x)}{g_{\ell}'(x)}=\lim\limits_{h\to  0}\frac{f(x)-f(x-h)}{g(x)-g(x-h)},\qquad h>0
	$$
	and its fractional power $(-{A})^\alpha$ is defined  by formula \eqref{BF01} for $\alpha{\,\in\,}(0,1)$.
	
	Operator $(-A)$  is an infinitesimal generator of a contraction semigroup $T_t$ on $C([a,b])_{kill(a)}$.
	In the invariant domain $C^2([a,b])_{kill(a)}$ the $T_t$ is
	\begin{enumerate}
		\item[(i)] bounded in the assumptions of Theorem \ref{T4} or
		\item[(ii)] $\|T_t\|\leq M (1+t^p)$, $0\leq t<\infty$,	
		where $M,p\in\mathbb{R}$, $M\geq 1$, $p\geq0$ in more general examples  \ref{Ex2}, \ref{Ex3} above.
	\end{enumerate}
	
	On the segment $[a,x]$ a partition with a step $h{\,\in\,}(0,1)$ is given. For this  partition generates the matrix operator $\mathcal{A}_n$   by formula \eqref{Aminus}. Operator $(-\mathcal{A}_n)^\alpha$ is the fractional power of $(-\mathcal{A}_n)$ defined by by formula \eqref{BF01}.
	Then 	  
	\begin{enumerate}
		\item  under condition (i)
		\begin{equation}\label{es1}
			\|((-{A})^\alpha f(x_n)-((-\mathcal{A}_n)^\alpha \cdot\mathbf{f}_n)\cdot I_{n}\|_{C([a,b])}
			\leq C_1\|f\|_{C^2([a,b])} \omega^\alpha(h),
		\end{equation} 
		where $C_1=C_1(M,\alpha)$ is  some constant,
		\item  	 under condition (ii)
	\begin{equation}\label{es2}
		\|((-{A})^\alpha f(x_n)-((-\mathcal{A}_n)^\alpha \cdot\mathbf{f}_n)\cdot I_{n}\|_{C([a,b])}
		\leq C_2\omega^{\frac{\alpha}{p+1}}(h)\|f\|_{C^2([a,b])},
	\end{equation} 
		where   $C_2=C_2(M,p,\alpha)$  is  some constant.
	\end{enumerate}
\end{theorem}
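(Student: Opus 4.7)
The plan is to cast the claim as an application of Theorem \ref{T2}, parts (3) and (5), with $T_t=e^{-tA}$ the Feller semigroup on $X=C([a,b])_{kill(a)}$ generated by $-A$, and $T_t^n=e^{-t\mathcal{A}_n}$ the discretized semigroup defined in \eqref{sg} on $C(\Omega_h)$, where $\Omega_h=\{x_0,\ldots,x_n\}$ is the partition and $\pi_h$ is the evaluation map. The invariant subspace playing the role of the ``$B$'' of Theorem \ref{T2} is $C^2([a,b])_{kill(a)}$ equipped with the norm $\|f\|_2=\|f\|+\|f'\|+\|f''\|$; Theorem \ref{T4} supplies the bound $\|T_tf\|_B\le c(t)\|f\|_B$ with $c(t)=K_2$ constant in case (i) of the statement and $c(t)=M(1+t^p)$ in case (ii).

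First I would check that $T_t^n$ is itself a Feller semigroup on $C(\Omega_h)$. Because $g$ is strictly increasing, the coefficients $a_{k+1}=1/(g_k-g_{k+1})$ are positive (the partition runs from $x$ backwards to $a$, so $g_k>g_{k+1}$). Therefore $-\mathcal{A}_n$ has nonnegative off-diagonal entries and row sums that vanish except for the last row, whose sum equals $-a_n<0$ and encodes killing at the boundary; hence $-\mathcal{A}_n$ is the generator of a sub-Markov chain on the grid, and $T_t^n$ is a positive contraction semigroup. Strong continuity is automatic in finite dimensions.

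Next I would establish the consistency estimate $\|(\mathcal{A}_n\pi_h-\pi_h A)f\|\le\omega(h)\|f\|_B$ with $\omega(h)=O(h)$. At a grid point $x_k$, apply second order Taylor expansion to both numerator and denominator of the finite difference ratio: for $f,g\in C^2$,
\begin{equation*}
f(x_k)-f(x_{k-1})=f'(x_k)h-\frac{1}{2}f''(\xi_1)h^2,\qquad g(x_k)-g(x_{k-1})=g'(x_k)h-\frac{1}{2}g''(\xi_2)h^2.
\end{equation*}
Dividing and using the lower bound $g'(x)\ge c_1>0$ (which holds globally under the hypotheses of Theorem \ref{T4}, and on compact subintervals in the settings of Examples \ref{Ex2} and \ref{Ex3}), a routine manipulation gives
\begin{equation*}
\frac{f(x_k)-f(x_{k-1})}{g(x_k)-g(x_{k-1})}=\frac{f'(x_k)}{g'(x_k)}+O(h)\cdot\frac{\|f'\|\,\|g''\|+\|f''\|\,\|g'\|}{c_1^2}.
\end{equation*}
Since $f'(x_k)/g'(x_k)=(Af)(x_k)$ by \eqref{Amin02}, the remainder is bounded by $Ch\|f\|_2$, with a constant $C$ depending only on the $C^2$ norm of $g$ and on $c_1$, uniformly in $k$ and in the right endpoint $x$ of the partition.

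With all three ingredients in place (Feller property of both semigroups, invariance bound on $B$, consistency of generators), Theorem \ref{T2} applies directly. Part (3), which covers uniformly bounded semigroups ($m\le 0$), yields estimate \eqref{es1} with rate $\omega^\alpha(h)=(Ch)^\alpha$ under condition (i); part (5), which covers polynomial growth $\|T_t\|\le M(1+t^p)$, yields estimate \eqref{es2} with rate $\omega^{\alpha/(p+1)}(h)$ under condition (ii). The multiplication by $I_n$ extracts the first component of $(-\mathcal{A}_n)^\alpha\mathbf{f}_n$, which by the construction in \eqref{OpA} corresponds to the value at the right endpoint $x_n=x$; letting $x$ range over $(a,b]$ and using the uniformity of the Taylor constant converts the pointwise bound into the $C([a,b])$-norm bound claimed. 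The main anticipated obstacle is ensuring this uniformity of $\omega(h)$ across the family of partitions indexed by the varying endpoint $x$; this follows from the global $C^2$ norms of $f$ and $g$ together with a uniform positive lower bound on $g'$, but needs a bit of care in the degenerate settings of Examples \ref{Ex2} and \ref{Ex3} where $g'$ may fail to be bounded below globally, so the estimate then holds only on compact subintervals bounded away from the endpoint where $g'$ degenerates.
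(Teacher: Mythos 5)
Your proposal is correct and follows essentially the same route as the paper: establish the generator-consistency estimate $\|(\mathcal{A}_n\pi_h-\pi_h A)f\|\le Ch\|f\|_{C^2}$ and then invoke Theorem \ref{T2}, parts (3) and (5), with $B=C^2([a,b])_{kill(a)}$ and the semigroup bounds supplied by Theorem \ref{T4} and the examples. The only notable difference is the Taylor step: the paper uses the Taylor formula with respect to $g$ (formula \eqref{Taylor} with $m=1$), which gives the constant $\|f_g''\,g'\|_{C([a,b])}$ directly and does not require $g'$ to be bounded away from zero, whereas you expand $f$ and $g$ separately and divide, which needs $g'\ge c_1>0$ --- exactly the restriction you flag for the degenerate settings of Examples \ref{Ex2} and \ref{Ex3}.
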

\begin{proof}
	Since $f(x){\,\in\,}C^2[a,b]$ and $g(x){\,\in\,}C^2[a,b]$, $g'>0$   on $(a,b]$, then 
	the problem \eqref{Cauchy} is well-posed, $T_t$ is
	the contraction semigroup acting in spaces from Theorem \ref{T4}
and operator $T_t$ satisfies to inequality
	$$
	||T_t||\leq M (1+t^p),\qquad M,p\in\mathbb{R}, \qquad M\geq 1, \qquad p\geq 0.
	$$

	By \eqref{OpA} we obtain
	$$
	\|Af(x)-(\mathcal{A}_n \cdot \mathbf{f}_n)\cdot I_{n}\|_{C([a,b])}=\left\|f_{g-}'(x_n)-\frac{f(x_n)-f(x_{n-1})}{g(x_n)-g(x_{n-1})}\right\|_{C([a,b])}.
	$$
	Putting   $m=1$ in \eqref{Taylor}, we get 
	$$
	\left\|f_{g-}'(x_n)-\frac{f(x_n)-f(x_{n-1})}{g(x_n)-g(x_{n-1})}\right\|_{C([a,b])}\leq \max\limits_{x\in[x_{n-1},x_{n}]}|f_g''(x)g'(x)| \cdot h.
	$$
	Therefore,
	$$
	\|Af(x)-(\mathcal{A}_n \cdot \mathbf{f}_n)\cdot I_{n}\|_{C([a,b])}\leq \mathcal{C}_1\cdot h,
	$$
	where $\mathcal{C}_1=\|f_g''(x)g'(x)\|_{C([a,b])}\in\mathbb{R}$ that gives \eqref{Ah}.
	Then, applying Theorem \ref{T2} we obtain estimates	\eqref{es1} and \eqref{es2}.
\end{proof}

\section{Arbitrary power of two-band matrices}

In section \ref{NP} the natural power of  two-band matrices were obtained. Next, to obtain fractional powers  formulas  \eqref{BF01} and \eqref{BF03} 
should be used. In this section we obtain the direct formula for calculating  fractional powers   
of two-band matrices.

\begin{theorem}\label{theo01}	Let ${\rm det}\, {\mathcal A}_n\neq 0$, $a_1\neq a_2\neq ...\neq a_n$, then the matrix
	$$
	{\mathcal A}_n=\begin{pmatrix}
		a_1 & -a_1 & 0          & \cdots & 0 \\
		0 & a_2 & -a_2    & \cdots & 0 \\
		0 & 0 & a_3     & \cdots & 0 \\
		\vdots & \vdots & \vdots    & \ddots & \vdots  \\
		0 & 0      & 0&  \cdots & a_n 
	\end{pmatrix}
	$$
	is diagonalizable and eigenvalue decomposition for matrix $A$ is
	\begin{equation}\label{diag}
		{\mathcal A}_n=PDP^{-1},
	\end{equation}
	where	
	$P=(b_{s\,m})_{m=1,s=1}^{n,n}$, $P^{-1}=(c_{s\,m})_{m=1,s=1}^{n,n}$, $D=(d_{sm})_{m=1,s=1}^{n,n}$,
	$$
	b_{sm}= \left\{ \begin{array}{ll}
		1 & \mbox{if $s=m$};\\
		0 & \mbox{if $m<s$};\\
		\frac{ \prod\limits_{i=s}^{m-1}a_i}{\prod\limits_{i=s}^{m-1}(a_i-a_m)} & \mbox{if  $s<m$},
	\end{array} \right. 
	\qquad
	c_{sm}= \left\{ \begin{array}{ll}
		1 & \mbox{if $s=m$;}\\
		0 & \mbox{if $m<s$};\\
		\frac{\prod\limits_{i=s}^{m-1}a_i}{\prod\limits_{i={s+1}}^{m}(a_{i}-a_{s})} & \mbox{if   $s< m$},
	\end{array} \right.  
	$$
	$$
	d_{s\,m}= \left\{ \begin{array}{ll}
		a_m & \mbox{if $m=s$};\\
		0   & \mbox{if $s\neq m$}.\end{array} \right. 
	$$
\end{theorem}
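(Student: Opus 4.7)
The plan is to verify the factorization \eqref{diag} directly in three stages: identify the spectrum, check that the columns of $P$ are eigenvectors, and confirm that the matrix with entries $c_{sm}$ is the inverse of $P$.

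First, since $\mathcal{A}_n$ is upper triangular with diagonal $a_1,\dots,a_n$ and these entries are pairwise distinct by hypothesis, its spectrum is $\{a_1,\dots,a_n\}$ consisting of $n$ simple eigenvalues; hence $\mathcal{A}_n$ is diagonalizable, and the diagonal factor $D$ must have exactly the form stated.

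Second, I would check that the $m$-th column $v^{(m)}=(b_{1m},\dots,b_{nm})^{T}$ of $P$ is an eigenvector corresponding to $a_m$. Row $s$ of the identity $\mathcal{A}_n v^{(m)}=a_m v^{(m)}$ reads $a_s b_{sm}-a_s b_{s+1,m}=a_m b_{sm}$ for $s<n$, with the last row reducing to $a_n b_{nm}=a_m b_{nm}$. The cases $s>m$ (both sides zero), $s=m$ (only the term $a_m\cdot 1$ survives), and $s=n$ with $m<n$ are immediate. For $s<m$ the relation reduces to $(a_s-a_m)b_{sm}=a_s b_{s+1,m}$, which follows by splitting off the $i=s$ factor from both the numerator and the denominator of the closed form for $b_{sm}$.

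Third, I would verify $P\cdot P^{-1}=I$ by computing the $(s,m)$ entry of the product for $s\le m$. Both factors are upper triangular with unit diagonal, so the diagonal of the product is automatically correct and only the entries with $s<m$ require attention. A direct multiplication gives
\[
b_{sk}\,c_{km}=\frac{\prod_{i=s}^{m-1}a_i}{\prod_{i=s,\,i\neq k}^{m}(a_i-a_k)}, \qquad s\le k\le m,
\]
because $\prod_{i=s}^{k-1}a_i$ from $b_{sk}$ combines with $\prod_{i=k}^{m-1}a_i$ from $c_{km}$ to give $\prod_{i=s}^{m-1}a_i$, while the two denominators concatenate into the product over all $i\in\{s,\dots,m\}\setminus\{k\}$. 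Flipping the sign of each of the $m-s$ factors in the denominator yields
\[
(P\cdot P^{-1})_{sm}=(-1)^{m-s}\prod_{i=s}^{m-1}a_i\sum_{k=s}^{m}\frac{1}{\prod_{i=s,\,i\neq k}^{m}(a_k-a_i)},
\]
and the remaining sum is exactly the right-hand side of \eqref{tai0} with numerator $a_k^{0}$ and $m-s+1\ge 2$ arguments. Being the evaluation of a complete homogeneous symmetric polynomial of negative degree, it vanishes, so $(P\cdot P^{-1})_{sm}=0$ for all $s<m$, completing the identification of the inverse.

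The main obstacle I expect is the bookkeeping in the third step: correctly merging the two rational expressions into a single partial-fraction kernel, tracking the sign $(-1)^{m-s}$, and recognizing the resulting sum as the vanishing case of Sylvester's identity \eqref{tai0}. Once this reduction is set up, both the eigenvector verification and the triangular structure make the rest routine.
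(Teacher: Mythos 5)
Your argument is correct, and it differs from the paper's proof in a way worth noting. For the matrix $P$ the two routes are essentially equivalent: the paper \emph{derives} the entries $b_{sm}$ by solving $\mathcal{A}_nP=PD$ row by row, obtaining the recursion $b_{sm}=\frac{a_s}{a_s-a_m}b_{s+1\,m}$ with $b_{mm}=1$ and iterating it, while you \emph{verify} that the stated closed form satisfies exactly that relation $(a_s-a_m)b_{sm}=a_sb_{s+1,m}$; this is the same computation read in opposite directions. The genuine difference is in the treatment of $P^{-1}$: the paper again works recursively, computing $c_{s\,s+1}$, $c_{s\,s+2}$, $c_{s\,s+3}$ explicitly and then asserting the general pattern, whereas you check $PP^{-1}=I$ for all entries at once by merging $b_{sk}c_{km}$ into the single kernel $\prod_{i=s}^{m-1}a_i\big/\prod_{i=s,\,i\neq k}^{m}(a_i-a_k)$ and invoking the vanishing of $\sum_{k=s}^{m}\prod_{i=s,\,i\neq k}^{m}(a_k-a_i)^{-1}$ for $m>s$. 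Your route buys a complete, uniform verification where the paper relies on pattern recognition after three cases, at the price of needing that vanishing identity; note that the paper's \eqref{tai0} is only stated for $\mathscr{H}_q$ with $q\ge 0$, so "negative degree" is not literally covered by it and you should supply the one-line justification (e.g.\ it is the coefficient of $x^{m-s}$ in the Lagrange interpolation of the constant function $1$ at the $m-s+1\ge 2$ nodes $a_s,\dots,a_m$, or it follows from the partial-fraction expansion of $1/\prod_{i=s}^{m}(x-a_i)$ by comparing behaviour as $x\to\infty$). With that remark added, your proof is complete; as in the paper, the hypothesis $\det\mathcal{A}_n\neq 0$ is not actually used for the decomposition itself, only the pairwise distinctness of the $a_i$.
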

\begin{proof}
	
	Eigenvalues for $A$ are $a_1,a_2,...,a_n$. Since $a_1\neq a_2\neq ...\neq a_n$ then matrix ${\mathcal A}_n$ is diagonalizable  and can be written in the form
	$ {\mathcal A}_n=PDP^{-1}$, where 
	$$
	D=\begin{pmatrix}
		a_{1}   & 0     &         \cdots & 0 \\
		0       & a_{2} &         \cdots & 0 \\
		\vdots  & \vdots& \ddots         & \vdots  \\
		0       & 0     &   \cdots       & a_{n} 
	\end{pmatrix}
	$$ is  a diagonal matrix and	
	$$
	P=\begin{pmatrix}
		b_{11} & b_{12}          & \cdots & b_{1n} \\
		0 & b_{22}      & \cdots & b_{2n} \\
		\vdots & \vdots      & \ddots & \vdots  \\
		0 & 0      & \cdots & b_{nn} 
	\end{pmatrix}
	$$ is an invertible matrix. Let find $P$ and $P^{-1}$.
	Since
	$$
	PD= \begin{pmatrix}
		b_{11} & b_{12}          & \cdots & b_{1n} \\
		0 & b_{22}      & \cdots & b_{2n} \\
		\vdots & \vdots      & \ddots & \vdots  \\
		0 & 0      & \cdots & b_{nn} 
	\end{pmatrix}\cdot \begin{pmatrix}
		a_{1}   & 0     &         \cdots & 0 \\
		0       & a_{2} &         \cdots & 0 \\
		\vdots  & \vdots& \ddots         & \vdots  \\
		0       & 0     &   \cdots       & a_{n} 
	\end{pmatrix}	=\begin{pmatrix}
		a_1\,	b_{11}  & a_2\,	b_{12}                     & \cdots & a_n\,	b_{1n}  \\
		0  & a_2\, b_{22}        & \cdots & a_n\,b_{2n} \\
		\vdots      & \vdots            & \ddots   & \vdots  \\
		0 & 0      & \cdots &a_n\, b_{nn} 
	\end{pmatrix},
	$$
	then an equality $AP=PD$ gives
	$$
	\begin{pmatrix}
		a_1 & -a_1 & 0          & \cdots & 0 \\
		0 & a_2 & -a_2    & \cdots & 0 \\
		0 & 0 & a_3     & \cdots & 0 \\
		\vdots & \vdots & \vdots    & \ddots & \vdots  \\
		0 & 0      & 0&  \cdots & a_n 
	\end{pmatrix}\cdot \begin{pmatrix}
		b_{11} & b_{12}  & b_{13}         & \cdots & b_{1n} \\
		0 & b_{22}   & b_{23}   & \cdots & b_{2n} \\
		0 & 0& b_{33}      & \cdots & b_{3n} \\
		\vdots & \vdots  &  \vdots   & \ddots & \vdots  \\
		0 & 0 & 0      & \cdots & b_{nn} 
	\end{pmatrix}
	=\begin{pmatrix}
		a_1\,	b_{11}  & a_2\,	b_{12}   &      a_3\,	b_{13}           & \cdots & a_n\,	b_{1n}  \\
		0  & a_2\, b_{22} & a_3\, b_{23}        & \cdots & a_n\,b_{2n} \\
		0  &  	0  & a_3\, b_{32}        & \cdots & a_n\,b_{3n} \\
		\vdots      & \vdots   & \vdots           & \ddots   & \vdots  \\
		0 & 0  & 0    & \cdots &a_n\, b_{nn} 
	\end{pmatrix}
	$$
	or
	$$
	\begin{pmatrix}
		a_1b_{11} & a_1(b_{12}-b_{22})  & a_1(b_{13}-b_{23})         & \cdots & a_1(b_{1n}-b_{2n}) \\
		0 &a_2 b_{22}   & a_2(b_{23}-b_{33})   & \cdots & a_2(b_{2n}-b_{3n}) \\
		0 & 0& a_3b_{33}      & \cdots & a_3(b_{3n}-b_{4n}) \\
		\vdots & \vdots  &  \vdots   & \ddots & \vdots  \\
		0 & 0 & 0      & \cdots & a_nb_{nn} 
	\end{pmatrix}
	=\begin{pmatrix}
		a_1\,	b_{11}  & a_2\,	b_{12}   &      a_3\,	b_{13}           & \cdots & a_n\,	b_{1n}  \\
		0  & a_2\, b_{22} & a_3\, b_{23}        & \cdots & a_n\,b_{2n} \\
		0  &  	0  & a_3\, b_{32}        & \cdots & a_n\,b_{3n} \\
		\vdots      & \vdots   & \vdots           & \ddots   & \vdots  \\
		0 & 0  & 0    & \cdots &a_n\, b_{nn} 
	\end{pmatrix}.
	$$
	For the first row we obtain
	$$
	\left\{ \begin{array}{ll}
		a_1\cdot b_{11}&=a_1\cdot b_{11}\\
		a_1(b_{12}-b_{22}) &=a_2\cdot b_{12}\\
		...\\
		a_1(b_{1n}-b_{2n})&=a_n\cdot b_{1n} 
	\end{array} \right.
	$$
	for the second row:
	$$
	\left\{ \begin{array}{ll}
		a_2\cdot b_{22}&=a_2\cdot b_{22}\\
		a_2(b_{23}-b_{33})&=a_3\cdot b_{23}\\
		...\\
		a_2(b_{2n}-b_{3n})&=a_n\cdot b_{2n} 
	\end{array} \right. ,...,
	$$
	etc. and for the n-th row
	$$
	a_n\cdot b_{nn}=a_n\cdot b_{nn}.
	$$
	Let $s$ be the number of row, $m$ be the number of column,  $s=1,...,n$, $m=1,..,n$. For all diagonal elements we put
	$$
	b_{11}=b_{22}=...=b_{nn}=1.
	$$
	If  $s< m$ then 
	we can write the system
	$$
	b_{sm}=\frac{a_s}{a_s-a_m}b_{s+1\, m}.
	$$
	For $s=m-1$ we get
	$$
	b_{m-1\,m}=\frac{a_{m-1}}{a_{m-1}-a_m},
	$$
	next, for $s=m-2$
	$$
	b_{m-2\,m}=\frac{a_{m-2}}{a_{m-2}-a_m}b_{m-1\, m}=\frac{a_{m-1}a_{m-2}}{(a_{m-1}-a_m)(a_{m-2}-a_m)}.
	$$
	Therefore, for $s<m$ 
	$$
	b_{m-s\,m}=\frac{\prod\limits_{i=1}^{s} a_i}{\prod\limits_{i=1}^{s}(a_{m-i}-a_m)}
	$$	
	or for $s<m$
	$$
	b_{sm}=\frac{ \prod\limits_{i=s}^{m-1}a_i}{\prod\limits_{i=s}^{m-1}(a_i-a_m)}.
	$$
	
	Now let find $P^{-1}$. Inverse of an  lower triangular matrix $P$ is another upper
	triangular matrix
	$$
	P^{-1}=\begin{pmatrix}
		c_{11} & c_{12}           & \cdots &    c_{1n} \\
		0 & c_{22}      & \cdots &   c_{2n} \\
		\vdots & \vdots      & \ddots &   \vdots  \\
		0 & 0      & \cdots &   c_{nn} 
	\end{pmatrix}.
	$$
	Let find  elements $c_{sm}$ of $P^{-1}$ from the system
	$$
	PP^{-1}= 	\begin{pmatrix}
		b_{11} & b_{12}          & \cdots & b_{1n} \\
		0 & b_{22}      & \cdots & b_{2n} \\
		\vdots & \vdots      & \ddots & \vdots  \\
		0 & 0      & \cdots & b_{nn} 
	\end{pmatrix}\cdot\begin{pmatrix}
		c_{11} & c_{12}           & \cdots &    c_{1n} \\
		0 & c_{22}      & \cdots &   c_{2n} \\
		\vdots & \vdots      & \ddots &   \vdots  \\
		0 & 0      & \cdots &   c_{nn} 
	\end{pmatrix}
	=\begin{pmatrix}
		1         & 0        &   \cdots & 0 \\
		0         & 1        &   \cdots & 0 \\
		\vdots    &   \vdots & \ddots   & \vdots  \\
		0         & 0        &   \cdots & 1 
	\end{pmatrix}.
	$$
	It is clear that
	$$
	b_{mm}c_{mm}=1,\qquad m=1,...,n
	$$
	so diagonal elements of $P^{-1}$ are reciprocal of the elements of $P$:
	$$
	c_{mm}=\frac{1}{b_{mm}}=1,\qquad m=1,...,n.
	$$
	For other elements we get
	$$
	\sum\limits_{i=s}^kb_{si}c_{ik}=0,\qquad s=1,...,n,\qquad k=s+1,...,n.
	$$
	Putting $k=s+1$ we obtain
	$$
	\sum\limits_{i=s}^{s+1}b_{si}c_{is+1}=b_{ss}c_{ss+1}+b_{ss+1}c_{s+1s+1}=0,\qquad s=1,...,n-1
	$$
	or
	$$
	c_{ss+1}=-\frac{b_{ss+1}}{b_{ss}}c_{s+1s+1}=-b_{ss+1}=\frac{a_{s}}{a_{s+1}-a_{s}},\qquad s=1,...,n-1.
	$$
	For $k=s+2$ we get	
	$$
	\sum\limits_{i=s}^{s+2}b_{si}c_{is+2}=b_{ss}c_{ss+2}+b_{ss+1}c_{s+1s+2}+b_{ss+2}c_{s+2s+2}=0,\qquad s=1,...,n-2,
	$$
	$$
	c_{ss+2}=-\frac{1}{b_{ss}}(b_{ss+1}c_{s+1s+2}+b_{ss+2}c_{s+2s+2})
	=-(b_{ss+1}c_{s+1s+2}+b_{ss+2})=
	$$
	$$
	=-\left( -\frac{a_{s}a_{s+1}}{(a_{s+1}-a_s)(a_{s+2}-a_{s+1})}+
	\frac{a_{s}a_{s+1}}{(a_{s+2}-a_{s})(a_{s+2}-a_{s+1})}\right) =
	$$
	$$
	=- \frac{a_sa_{s+1}(a_{s+1}-a_{s+2})}{(a_{s+1}-a_s)(a_{s+2}-a_{s})(a_{s+2}-a_{s+1})} =\frac{a_sa_{s+1}}{(a_{s+1}-a_{s})(a_{s+2}-a_{s})}.
	$$
	
	For $k=s+3$ we get	
	$$
	\sum\limits_{i=s}^{s+3}b_{si}c_{is+3}=b_{ss}c_{ss+3}+b_{ss+1}c_{s+1s+3}+b_{ss+2}c_{s+2s+3}+b_{ss+3}c_{s+3s+3}=0,\qquad s=1,...,n-3,
	$$
	$$
	c_{ss+3}=-\frac{1}{b_{ss}}(b_{ss+1}c_{s+1s+3}+b_{ss+2}c_{s+2s+3}+b_{ss+3}c_{s+3s+3})=
	$$
	$$=-(b_{ss+1}c_{s+1s+3}+b_{ss+2}c_{s+2s+3}+b_{ss+3})=
	$$
	$$
	=-\left(-\frac{a_sa_{s+1}a_{s+2}}{(a_{s+1}-a_{s})(a_{s+2}-a_{s+1})(a_{s+3}-a_{s+1})}+
	\frac{a_sa_{s+1}a_{s+2}}{(a_{s+2}-a_{s})(a_{s+2}-a_{s+1})(a_{s+3}-a_{s+2})}-\right.
	$$
	$$
	\left.-\frac{a_sa_{s+1}a_{s+2}}{(a_{s+3}-a_{s})(a_{s+3}-a_{s+1})(a_{s+3}-a_{s+2})} \right)= 
	$$
	$$
	=\frac{a_sa_{s+1}a_{s+2}}{(a_{s+1}-a_{s})(a_{s+2}-a_{s})(a_{s+3}-a_{s})}.
	$$

	Therefore for $s=1,...,n$,
	$$
	c_{ss+m}=\frac{\prod\limits_{i=s}^{s+m-1}a_i}{\prod\limits_{i={s+1}}^{s+m}(a_{i}-a_{s})}
	$$
	or for   for $s<m$
	$$
	c_{sm}=\frac{\prod\limits_{i=s}^{m-1}a_i}{\prod\limits_{i={s+1}}^{m}(a_{i}-a_{s})}.
	$$
\end{proof}

\begin{theorem}\label{teopow} 	Let ${\rm det}\, {\mathcal A}_n\neq 0$, $a_1\neq a_2\neq ...\neq a_n$, then the power $\alpha\in\mathbb{R}$ of matrix ${\mathcal A}_n$ is
	\begin{equation}\label{PowMat03}
		{\mathcal A}_n^\alpha=(p_{sm})_{m=1,s=1}^{n,n},
	\end{equation}
	where
	\begin{equation}\label{PowMatEl01}
		p_{sm}=\left\{ \begin{array}{ll}
			0 & \mbox{if $m<s$};\\
			\prod\limits_{i=s}^{m-1}a_i\cdot\sum\limits_{j=s}^m\frac{ a_j^\alpha}{\prod\limits_{i=s,j\neq i}^{m}(a_i-a_j)} & \mbox{if  $s< m$};\\
			a_m^\alpha & \mbox{if $m=s$}.	
		\end{array} \right.
	\end{equation}
\end{theorem}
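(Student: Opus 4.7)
The plan is to leverage the eigenvalue decomposition established in Theorem \ref{theo01}. Because the eigenvalues $a_1,\ldots,a_n$ are pairwise distinct and nonzero (the latter from $\det {\mathcal A}_n\neq 0$), the matrix is diagonalizable as ${\mathcal A}_n=PDP^{-1}$ and the real power is then defined through the standard functional calculus by
\[
{\mathcal A}_n^\alpha = P D^\alpha P^{-1},\qquad D^\alpha=\operatorname{diag}(a_1^\alpha,\ldots,a_n^\alpha),
\]
where $a_j^\alpha$ is understood through the principal branch, consistent with the convention already used in Section \ref{Sec02}. The entire task therefore reduces to an algebraic computation of the entries of this triple product using the explicit expressions for $P$ and $P^{-1}$ supplied by Theorem \ref{theo01}.

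First I would observe that ${\mathcal A}_n^\alpha$ is again upper triangular: both $P$ and $P^{-1}$ are upper triangular with unit diagonal, and sandwiching a diagonal matrix between them preserves this structure. This immediately gives $p_{sm}=0$ for $s>m$ and $p_{mm}=b_{mm}\,a_m^\alpha\,c_{mm}=a_m^\alpha$ on the diagonal. For $s<m$, only $k\in\{s,\ldots,m\}$ contribute, so
\[
p_{sm}=\sum_{k=s}^{m} b_{sk}\,a_k^\alpha\,c_{km},
\]
where the convention that empty products equal $1$ cleanly handles the endpoint cases $k=s$ and $k=m$.

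The key step is then a short index-bookkeeping simplification. Substituting the expressions for $b_{sk}$ and $c_{km}$ and using the two elementary identities
\[
\prod_{i=s}^{k-1} a_i \,\cdot\, \prod_{i=k}^{m-1} a_i \;=\; \prod_{i=s}^{m-1} a_i,\qquad
\prod_{i=s}^{k-1}(a_i-a_k)\,\cdot\,\prod_{i=k+1}^{m}(a_i-a_k) \;=\; \prod_{\substack{i=s\\ i\neq k}}^{m}(a_i-a_k),
\]
each summand collapses to $a_k^\alpha\,\prod_{i=s}^{m-1}a_i\,\big/\,\prod_{i=s,\,i\neq k}^{m}(a_i-a_k)$. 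Factoring out the common product $\prod_{i=s}^{m-1}a_i$ yields precisely formula \eqref{PowMatEl01}.

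I do not anticipate a serious conceptual obstacle; the hardest part is the careful accounting of index ranges and empty products. The only mild subtlety is the choice of branch for $a_j^\alpha$ when some $a_j$ is negative or non-real, resolved by fixing a principal branch throughout. As a sanity check, setting $\alpha=k\in\mathbb{N}$ must reproduce formula \eqref{PowMatEl02} of the Corollary; since the two expressions are formally identical, this consistency is automatic and provides a useful cross-verification of the derivation.
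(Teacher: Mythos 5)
Your proposal is correct and follows essentially the same route as the paper's own proof: it invokes the decomposition ${\mathcal A}_n=PDP^{-1}$ from Theorem \ref{theo01}, writes ${\mathcal A}_n^\alpha=PD^\alpha P^{-1}$, and collapses the entrywise sum $\sum_{j} b_{sj}a_j^\alpha c_{jm}$ via exactly the same product-merging identities to obtain \eqref{PowMatEl01}. The only additions (principal-branch remark, consistency check against the Corollary) are harmless refinements of the same argument.
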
	
\begin{proof} By Theorem \ref{theo01} we get ${\mathcal A}_n=PDP^{-1}$. Therefore, for any $\alpha\in \mathbb{R}$ we can write ${\mathcal A}_n^\alpha=PD^\alpha P^{-1}$ or
	$$
	{\mathcal A}_n^\alpha=\begin{pmatrix}
		p_{11}     & p_{12}              & \cdots  & p_{1n} \\
		0     & p_{22}          & \cdots  & p_{2n} \\
		\vdots     & \vdots          & \ddots  & \vdots  \\
		0     & 0          & \cdots  & p_{nn} 
	\end{pmatrix}=\begin{pmatrix}
		b_{11}     & b_{12}               & \cdots  & b_{1n} \\
		0    & b_{22}          & \cdots  & b_{2n} \\
		\vdots     & \vdots          & \ddots  & \vdots  \\
		0     & 0          & \cdots  & b_{nn} 
	\end{pmatrix}\cdot\begin{pmatrix}
		a_{1}^\alpha   & 0     &         \cdots & 0 \\
		0       & a_{2}^\alpha &         \cdots & 0 \\
		\vdots  & \vdots& \ddots         & \vdots  \\
		0       & 0     &   \cdots       & a_{n}^\alpha 
	\end{pmatrix}\cdot \begin{pmatrix}
		c_{11} & c_{12}          & \cdots &   c_{1n} \\
		0 & c_{22}      & \cdots &   c_{2n} \\
		\vdots & \vdots      & \ddots &   \vdots  \\
		0 & 0      & \cdots &   c_{nn} 
	\end{pmatrix}=
	$$
	$$
	=\begin{pmatrix}
		b_{11}     & b_{12}               & \cdots  & b_{1n} \\
		0    & b_{22}          & \cdots  & b_{2n} \\
		\vdots     & \vdots          & \ddots  & \vdots  \\
		0     & 0          & \cdots  & b_{nn} 
	\end{pmatrix}\cdot
	\begin{pmatrix}
		a_{1}^\alpha c_{11} & a_{1}^\alpha c_{12}           & \cdots &   a_{1}^\alpha c_{1n} \\
		0 & a_{2}^\alpha c_{22}      & \cdots &   a_{2}^\alpha c_{2n} \\
		\vdots & \vdots      & \ddots &   \vdots  \\
		0 & 0      & \cdots &  a_{n}^\alpha  c_{nn} 
	\end{pmatrix}=
	$$
	$$
	=\begin{pmatrix}
		a_{1}^\alpha b_{11}c_{11}      & a_{1}^\alpha b_{11}c_{12} +	a_{2}^\alpha b_{12}c_{22}          & \cdots &   
		a_{1}^\alpha b_{11}c_{1n} +	a_{2}^\alpha b_{12}c_{2n}+...+a_{n}^\alpha b_{1n}c_{nn} \\
		0                      & a_{2}^\alpha b_{22} c_{22}      & \cdots &   a_{2}^\alpha b_{22}c_{2n} +	a_{3}^\alpha b_{23}c_{3n}+...+a_{n}^\alpha b_{2n}c_{nn} \\
		\vdots                                                                              & \vdots      & \ddots &   \vdots  \\
		0 & 0      & \cdots &  a_{n}^\alpha b_{nn} c_{nn} 
	\end{pmatrix}.
	$$
	Since $c_{mm}=\frac{1}{b_{mm}}=1$, $m=1,...,n$  on the main diagonal we obtain $p_{mm}=a_{m}^\alpha$, $m=1,...,n$. For $s<m$ we get
	$$
	p_{sm}=\sum\limits_{j=s}^m a_j^\alpha b_{sj}c_{jm}.
	$$
	For $b_{sj}$ and $c_{jm}$	we have representations by Theorem \ref{theo01}
	$$
	b_{sj}= \left\{ \begin{array}{ll}
		1 & \mbox{if $s=j$};\\
		0 & \mbox{if $j<s$};\\
		\frac{ \prod\limits_{i=s}^{j-1}a_i}{\prod\limits_{i=s}^{j-1}(a_i-a_j)} & \mbox{if  $s<j$},
	\end{array} \right. 
	\qquad
	c_{jm}= \left\{ \begin{array}{ll}
		1 & \mbox{if $j=m$;}\\
		0 & \mbox{if $m<j$};\\
		\frac{\prod\limits_{i=j}^{m-1}a_i}{\prod\limits_{i={j+1}}^{m}(a_{i}-a_{j})} & \mbox{if   $j< m$}.
	\end{array} \right.  
	$$
	
	When 
	$j=s<m$ we get 
	$$
	b_{ss}=1,\qquad c_{sm}=\frac{\prod\limits_{i=s}^{m-1}a_i}{\prod\limits_{i={s+1}}^{m}(a_{i}-a_{s})}=\frac{\prod\limits_{i=s}^{m-1}a_i}{\prod\limits_{i=s,i\neq s}^{m}(a_i-a_s) },
	$$
	when $j=m>s$ we get 
	$$c_{mm}=1,\qquad b_{sm}=\frac{ \prod\limits_{i=s}^{m-1}a_i}{\prod\limits_{i=s}^{m-1}(a_i-a_m)}=\frac{ \prod\limits_{i=s}^{m-1}a_i}{\prod\limits_{i=s,i\neq m}^{m}(a_i-a_m)}.$$
	
	For $s<j<m$, $s<m$	we have
	$$
	b_{sj}c_{jm}=\frac{ \prod\limits_{i=s}^{j-1}a_i}{\prod\limits_{i=s}^{j-1}(a_i-a_j)}\cdot \frac{\prod\limits_{i=j}^{m-1}a_i}{\prod\limits_{i={j+1}}^{m}(a_{i}-a_{j})}=\frac{\prod\limits_{i=s}^{m-1}a_i}{\prod\limits_{i=s,j\neq i}^{m}(a_i-a_j)},
	$$
	therefore for $s<m$
	$$
	p_{sm}=\sum\limits_{j=s}^m a_j^\alpha\frac{\prod\limits_{i=s}^{m-1}a_i}{\prod\limits_{i=s,j\neq i}^{m}(a_i-a_j)}=\prod\limits_{i=s}^{m-1}a_i\cdot\sum\limits_{j=s}^m\frac{ a_j^\alpha}{\prod\limits_{i=s,j\neq i}^{m}(a_i-a_j)}.
	$$	
	Taking into account the form of ${\mathcal A}_n^\alpha$  we get	that $p_{sm}$ can be written as \eqref{PowMatEl01}.
\end{proof}

\begin{remark}
	Taking into account Theorem \eqref{teopowder} we notice that  formula \eqref{PowMat03} for an arbitrary power $\alpha\in\mathbb{R}$ of two-band   matrix ${\mathcal A}_n$ is	matched with the particular case \eqref{PowMat} when $\alpha=k{\,\in\,}\mathbb{N}\cup\{0\}$ and formula \eqref{PowMat} was obtained by different way. So, we can introduce the generalisation of the complete homogeneous symmetric polynomial of degree $q$ in $m$ variables $a_1,...,a_m$ to the arbitrary power $\alpha\in\mathbb{R}$ by the formula
$$
		\mathscr{H}_{\alpha-m+1}(a_1,a_{2}...,a_m)=\sum\limits_{j=1}^m\frac{a_j^\alpha}{\prod\limits_{i=1,j\neq i}^{m}(a_j-a_i)}.
$$
\end{remark}

\section{Conclusion}\label{sec13}

Derivatives and integrals of a function with respect to another function are well-established concepts in basic calculus, relying on the chain rule and Riemann-Stieltjes integration, with natural applications in probability theory. In this article, we introduce the novel theory of fractional powers of such operators, developed via matrix approximations. First, the simplest but contentful example, which illustrates the naturalness of the matrix approach, is presented. From this example, it becomes clear that powers of the derivatives of a function with respect to another function are handy approximated by the powers of two-band matrices. 
Next, the positive integer powers of two-band matrices are obtained from the theory of complete homogeneous symmetric polynomials. Since we have the positive integer powers of operators, we can apply the semigroup theory and Balakrishnan's approach to construct fractional powers. 
In this way, we obtain general estimates for the rate of convergence of linear positive operators. 
To simplify future calculations, we find the arbitrary powers of two-band matrices.
The obtained results have many directions for further development which we are going to addressed to the next publications. First, obtaining fractional powers of operators with nonmonotonic measures with applications to theory of probability; second, solving fractional differential equations; third, numerical methods; fourth, obtaining fractional powers of gradients and other multidimensional differentiation operators; and much more.

\section*{Declarations}

\begin{itemize}
\item The authors declare that there is no conflict of interest.
\end{itemize}

\end{document}